\newtheorem{definition}{Definition}[section]
\newtheorem{lemma}{Lemma}[section]
\newtheorem{theorem}{Theorem}[section]
\newtheorem{remark}{Remark}[section]
\numberwithin{equation}{section}
\newcommand{\diag}{\mathrm{diag}}
\newcommand{\al}{\alpha}
\newcommand{\ep}{\epsilon}
\newcommand{\m}{\mathbb}
\newcommand{\om}{\omega}
\newcommand{\Ga}{\Gamma}
\newcommand{\la}{\lambda}
\newcommand{\La}{\Lambda}
\newcommand{\vphi}{\varphi}
\newcommand{\wt}{\widetilde}
\newcommand{\rank}{\mathrm{rank}}
\title{Effective Numerical Simulation of Fault Transient System}
\author[1, 2, 3]{Sixu Wu}
\author[3]{Feng Ji}
\author[3]{Lu Gao}
\author[4]{Ruili Zhang}
\author[5]{Cunwei Tang}
\author[1, 2]{Yifa Tang\footnote{Corresponding Author}}
\affil[1]{LSEC, ICMSEC, Academy of Mathematics and Systems Science, 

Chinese Academy of Sciences, Beijing 100190, China}
\affil[2]{School of Mathematical Sciences, University of Chinese Academy of Sciences, 

Beijing 100049, China}
\affil[3]{State Key Laboratory of Advanced Power Transmission Technology, 

Beijing 102209, China}
\affil[4]{School of Mathematics and Statistics, Beijing Jiaotong University,
	
Beijing 100044, China}
\affil[5]{Urban Power Supply Branch, State Grid Beijing Electric Power Company, 

Bejing 100031, China}
\date{}
\begin{document}
\maketitle

\begin{abstract}
  Power systems, including synchronous generator systems, are typical systems that strive for stable operation. In this article, we numerically study the fault transient  process of a synchronous generator system based on the first benchmark model. That is, we make it clear whether an originally stable generator system can restore its stability after a short time of unstable transient process. To achieve this, we construct a structure-preserving method and compare it with the existing and frequently-used predictor-corrector method. We newly establish a reductive form of the circuit system and accelerate the reduction process. Also a switching method between two stages in the fault transient process is given. Numerical results show the effectiveness and reliability of our method. \\     
{\bf{Keywords}}
Synchronous generator system, Fault transient system, Predictor-corrector method, Structure-preserving method, Port-Hamiltonian descriptor system
\end{abstract}
\section{Introduction}
Fault transient system is a typical circuit system in which people focus on its transient stability, that is to say whether a stable system can reach a new stable state through some unstable transient process. The transient stability depends on both the original state of the system and the interference way\cite{PSSC}. In this article we numerically simulate a simplified case as shown in figure \ref{fig:simpcircuit}, a two-branch system with its lower branch firstly grounding and being cut off immediately after. Physically the longer the grounding state lasts, the longer it takes for the system to be stable again and there is a maximum time exceeding which the system cannot recover its stability, called the \emph{critical clearing time} (CCT). We will simulate the CCT and other corresponding physical quantities based on the electromagnetic transient model. 

Electromechanical transient model is a classic model to describe circuit systems\cite{PSSC}, which is easier to solve back to those eras without electronic computers since it presents the electric network by algebraic equations instead of differential equations and thus unsuitable for systems with high ratios of power electronic and renewable energy devices. Due to this, electromagnetic transient model is developed\cite{PMRA} and characters the  dynamic process of circuit systems accurately in continuous time level since it treats both the electric network and the mechanical part in form of differential equations equally. Electro-Magnetic Transient Program (EMTP) is a classical numerical method to solve this model. It divides the whole system into three subsystems, the circuit, the generator and the mechanical shaft, and exchange data among them after independent calculation of each subsystem. Thus EMTP is time consuming and lowers its numerical accuracy.

To improve this situation, Feng Ji et al. introduced predictor-corrector methods (P-C methods). This method firstly predicts the mechanical angle $\bm\theta$ by assuming that the angle speed constant in a time step $h$, i.e. $\bm{\theta}_{n+1}^{[0]}=\bm{\theta}_n+h\dot{\bm{\theta}}_n$. Then, this prediction $\bm{\theta}_{n+1}^{[0]}$ is used to predict the electric part $x_{E,n+1}=(\dot{\bm\Psi}_{n+1};\bm{\Psi}_{n+1})$ and to correct the mechanical part $x_{M,n+1}=(\dot{\bm\theta}_{n+1};\bm{\theta}_{n+1})$. See section \ref{sec:PCmethod} for details. P-C methods reveal a better numerical behaviour than EMTP but are long termly inferior to the structure-preserving method in Ref.~\cite{ENSS}. In this article, we will numerically compare P-C methods to the structure-preserving method in fault transient system.

Generally structure-preserving methods preserve a system's inherent structures and characteristic properties so that they usually exhibit better long term stability. Symplectic methods for Hamiltonian systems are typical structure-preserving methods first posed by Kang Feng\cite{S1} and then rapidly developed. For the electromagnetic transient model, by deeming the model a port-Hamiltonian system, Zhang et al.\cite{ENSS} proposed a method preserving a Dirac structure and applied it on electromagnetic steady state of synchronous generator system, resulting in long-term advantages. In this article we will apply this method on fault transient system, compare it to P-C methods and show that it also works for more general circuits in section \ref{sec:reducofgeneral}.

This article is organised as follows. In section \ref{sec:sgs} we briefly introduce synchronous generator system for fault transient case and derive the differential equations from Euler-Lagrange equation. Also we will show that the reduction process in Ref.~\cite{ENSS} also works for more general circuits in section \ref{sec:reducofgeneral}. Then, a special circuit of fault transient system will be given in section \ref{sec:specialcase} for later simulation. Sections \ref{sec:PCmethod} and \ref{sec:SPmethod} concisely recite constructions of P-C method and the structure-preserving method based on Dirac structure and port-Hamiltonian system. In section \ref{sec:simulation}, numerical simulations of P-C method and structure-preserving method are compared. Also the CCT and other corresponding physical quantities will be simulated and analysed for deeper understanding of fault transient systems. Finally in section \ref{sec:conclusion} a brief summery will be given.
\section{Synchronous generator system\label{sec:sgs}}
As shown in figure \ref{fig:ACSGS_tran}, a 7-winding generator is connected to an electric circuit of $n$ nodes, while the $n$-th node connects the generator directly and the first node touches the ground through a resistance $r_1\in (0,+\infty)$ in parallel to a Norton current source. For each $1\leq i<j\leq n$, nodes $i$ and $j$ are linked by a inductance $\ell_{ij}\in (0,+\infty]$, where $\ell_{ij}=+\infty$ simply represents that nodes $i$ and $j$ are not connected. Moreover, for $2\leq i\leq n$, the node $i$ may also touch the ground through a resistance $r_i\in (0,+\infty]$. Here again, if $r_i=+\infty$ then the circuit between node $i$ and the ground is open. According to Ref.~\cite{LMMS}, there are four circuit nodes in the 7-winding generator, i.e. nodes $f,\; D,\; g$ and $Q$.
\begin{figure}[ht]
   \centering
   \subfigure[Circuit part.]{\begin{minipage}[t]{0.6\linewidth}
   \centering
   \ctikzset{resistors/scale=0.4}
      \begin{tikzpicture}[scale=0.6,american]
        \draw (1,0) node[above]{$n$} to[short,*-] (-1,0) node[draw,circle,left]{$G$};
        \draw (2,-2) node[tlground]{} to[short,-*,european,R=$r_i$] (2,0) node[above]{$i$} to[short,-*,L=$\ell_{ij}$] (6,0) node[above]{$j$} to[european,R=$r_j$] (6,-2) node[tlground]{};
        \draw (7,0) node[above]{$1$} to[short,*-] (8,0) to[european,R=$r_1$] (10,0) -- (10,-1) node[ground]{} to[isource,l=$f(t)$] (8,-1) -- (8,0);
        \draw[dashed] (0,-3) rectangle (7.5,1);
        \draw[dashed] (1,0) -- (2,0);
        \draw[dashed] (6,0) -- (7,0);
      \end{tikzpicture}

    \end{minipage}\label{fig:genecircuit}}
   \subfigure[Synchronous generator.]{\begin{minipage}[t]{0.3\linewidth}
   \centering
   \includegraphics[scale=0.3]{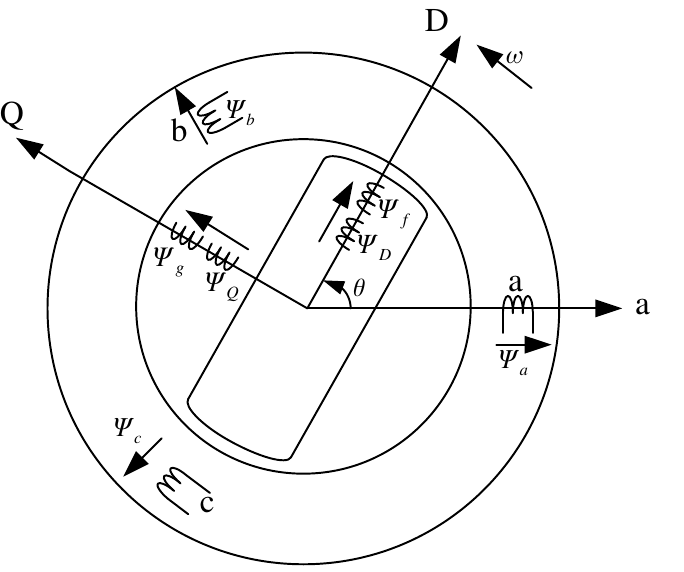}
   \end{minipage}}
   \subfigure[Mechanical shaft.]{\begin{minipage}[t]{0.5\linewidth}
   \centering
   \includegraphics[scale=0.4]{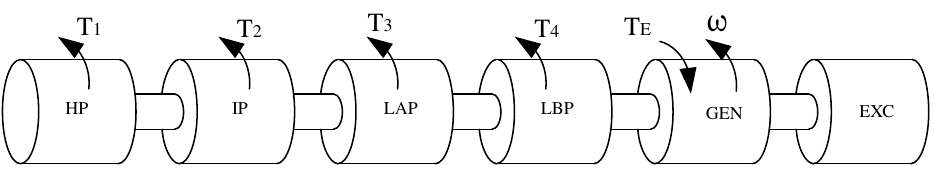}   \end{minipage}}
   
   \caption{Synchronous generator system.}
   \label{fig:ACSGS_tran}
\end{figure}

\subsection{Modeling of General Case \label{section 2.1}}
According to Refs.~\cite{LMMS} and \cite{DTP}, to describe a circuit by electromagnetic model, we use node flux linkages $\bm{\Psi}$ as position coordinates and node voltages $\dot{\bm{\Psi}}$ as velocity coordinates. In real world, electric circuits are 3-phase circuits of which each node in figure \ref{fig:ACSGS_tran} actually represents three nodes, such that they share a same module in voltages but the three phase angles differs $2\pi/3$ to each other. Here for simplicity, as suggested in Ref.~\cite{LMMS}, we use the $\alpha,\beta$ coordinate, i.e. each node $i$ ($1\leq i\leq n$) has two components $\alpha$ and $\beta$. For example, the flux linkages of node $1$ become $\Psi_{1\alpha},\Psi_{1\beta}$ now. Hence the flux linkages of the circuits can be written in vector form 
\[
    \bm{\Psi}=(\Psi_{1\alpha},\Psi_{1\beta},\dots,\Psi_{n\alpha},\Psi_{n\beta},\Psi_f,\Psi_D,\Psi_g,\Psi_Q)^\top\in \mathbb{R}^{2n+4},
\]
and $\dot{\bm{\Psi}}$ represents the voltages of the circuit.

In addition, there are six mass blocks in a 7-winding generator. It is natural to use
\[
    \bm{\theta}=(\theta_1,\theta_2,\theta_3,\theta_4,\theta_5,\theta_6)^\top
\]
and $\dot{\bm{\theta}}$ to represent the angular displacements and angular velocities of these blocks, respectively. 

It is shown\cite{LMMS,DTP} that the Lagrangian of this generator system has the following form 
\begin{equation}\label{eq:lag}
\mathcal{L}(\dot{\bm{\Psi}},\dot{\bm{\theta}},\bm{\Psi},\bm{\theta})
=\frac{1}{2}\dot{\bm{\theta}}^{\top} J\dot{\bm{\theta}}-\left(\frac{1}{2}\bm{\Psi}^{\top}(K_L+\Ga(\theta)) \bm{\Psi}+\frac{1}{2}\bm{\theta}^{\top} K\bm{\theta}\right)
\end{equation}
and the Rayleigh's dissipation function is 
\begin{equation}\label{eq:ray}
    \mathcal{R}=\frac{1}{2}\dot{\bm{\Psi}}^{\top}K_R \dot{\bm{\Psi}}+\frac{1}{2}\dot{\bm{\theta}}^{\top}D \dot{\bm{\theta}}-\dot{\bm{\theta}}^{\top}T-\dot{\bm{\Psi}}^{\top}f(t).
\end{equation}

For simplicity we list coefficient matrices in \eqref{eq:lag} and \eqref{eq:ray} directly in the following. One can consult Ref.~\cite{LMMS,DTP} for more physical meanings about them. First for the constant matrices,
\begin{align*}
    K_R&=\diag(r_1^{-1},r_1^{-1},\dots,r_n^{-1},r_n^{-1},r_f^{-1},r_D^{-1},r_g^{-1},r_Q^{-1})\in \m R^{2n+4},\\
    J&=\mathrm{diag}(J_1,J_2,J_3,J_4,J_5,J_6),\;D=0,\\
    T&=(T_1,T_2,T_3,T_4,0,0)^\top,\\
    K&=\begin{pmatrix}
    K_1 & -K_1 & 0 & 0 & 0 & 0 \\
    -K_1 & K_1+K_2	& -K_2 & 0 & 0 & 0 \\
    0 & -K_2 & K_2+K_3 & -K_3 & 0 & 0 \\
    0 & 0 & -K_3 & K_3+K_4 & -K_4 & 0 \\
    0 & 0 & 0 & -K_4 & K_4+K_5 & -K_5 \\
    0 & 0 & 0 & 0 & -K_5 & K_5
    \end{pmatrix}
\end{align*}
and 
\begin{equation}\label{eq:L}
  K_L=\begin{pmatrix}
  L & 0 \\ 0 & 0
  \end{pmatrix}\in \m R^{(2n+4)\times (2n+4)},\quad 
  L=\begin{pmatrix}
  L_{1\al,1\al}& L_{1\al,1\beta} & \cdots & L_{1\al,n\al} & L_{1\al,n\beta}\\
  L_{1\beta,1\al}& L_{1\beta,1\beta} & \cdots & L_{1\beta,n\al} & L_{1\beta,n\beta}\\
  \vdots & \vdots & \ddots & \vdots &\vdots\\
  L_{n\al,1\al}& L_{n\al,1\beta} & \cdots & L_{n\al,n\al} & L_{n\al,n\beta}\\
  L_{n\beta,1\al}& L_{n\beta,1\beta} & \cdots & L_{n\beta,n\al} & L_{n\beta,n\beta}
  \end{pmatrix}\in \m R^{2n\times 2n}
\end{equation}
where for all $1\leq i,j\leq n$, 
\[
  \quad L_{i\al,j\beta}=0,\quad L_{i\al,j\al}=L_{i\beta,j\beta}=\begin{cases}
    \sum_{k\neq i} \ell_{ik}^{-1}, & i=j,\\
    -\ell_{ij}^{-1}, & i\neq j.
  \end{cases}
\]
Also for those depend on time $t$ and angular $\theta$,
\[
    f(t)=\left(\frac{U_s}{r_1}\cos(\om_s t),\frac{U_s}{r_1}\cos(\om_s t),0,\dots,0,\frac{U_f}{r_f},0,0,0\right)^\top
\]
where $\om_s=120\pi$ and $U_s,U_f$ are given constant positive numbers. 
\[
    \Gamma(\theta)=\begin{pmatrix}
      0 & 0\\ 0 & P(\theta)\Gamma_0P(-\theta)
    \end{pmatrix}\in \m R^{(2n+4)\times (2n+4)},\quad 
    P(\theta)=\begin{pmatrix}
      \cos\theta & -\sin\theta & 0\\
      \sin\theta & \cos\theta & 0\\
      0 & 0 & I_4
    \end{pmatrix}\in \m R^{6\times 6}
\]
where $\Gamma_0\in \m R^{6\times 6}$ is a given constant positive definite matrix.  

By setting the generalised position $\bm q:=(\bm\Psi,\bm\theta)$ and momentum $\dot{\bm q}:=(\dot{\bm\Psi},\dot{\bm\theta})$, the Euler-Lagrange equation containing Rayleigh's dissipation function
$$\frac{\mathrm{d}}{\mathrm{d}t}\left(\frac{\partial \mathcal{L}}{\partial \dot{\bm{q}}}\right)-\frac{\partial \mathcal{L}}{\partial \bm{q}}+\frac{\partial \mathcal{R}}{\partial \dot{\bm{q}}}=0,$$
implies the dynamical equation of the generator system 
\begin{equation}\label{eq:oldsys}
\left\{\begin{aligned}
&K_R\dot{\bm{\Psi}}+(K_L+\Ga(\theta))\bm{\Psi}=f(t), \\
&J\ddot{\bm{\theta}}+D\dot{\bm{\theta}}+K\bm{\theta}+\frac{1}{2}\bm{\Psi}^{\top} \frac{d\Ga(\theta)}{d \theta} \bm{\Psi}=T,
\end{aligned}\right.
\end{equation}
here $\bm{\Psi}^{\top} \frac{d\Ga(\theta)}{d \theta} \bm{\Psi}:=\left(0,0,0,0,\bm{\Psi}^{\top} \frac{d\Ga(\theta)}{d \theta} \bm{\Psi},0\right)^{\top}$ is an abbreviation, since $\Ga(\theta)$ depends on $\theta:=\theta_5$ only. For later use, we give the following lemma.
\begin{lemma}\label{lem:Npostive}
  The matrix $K_L$ is non-negative definite and
  \[
      N(\theta):=K_L+\Gamma(\theta)
  \]
  is positive definite for all $\theta\in [0,2\pi)$.
\end{lemma}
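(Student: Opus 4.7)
The plan is to split the claim into three ingredients: recognize $L$ (the non-zero block of $K_L$) as (two copies of) a weighted graph Laplacian to get $K_L \succeq 0$; use the orthogonality $P(\theta)^\top = P(-\theta)$ to see that the non-zero block of $\Gamma(\theta)$ is congruent to $\Gamma_0$ and hence positive definite; and finally argue that the null space of $K_L$ meets the null space of $\Gamma(\theta)$ only at zero, using the connectivity of the circuit graph.

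For $K_L\succeq 0$, since $K_L = \mathrm{diag}(L, 0_{4\times 4})$, it suffices to show $L\succeq 0$. The condition $L_{i\al,j\beta}=0$ lets one split $L$ into an $\al$-block and a $\beta$-block, both equal to the $n\times n$ weighted Laplacian $L_0$ with $L_0[i,j]=\sum_{k\neq i}\ell_{ik}^{-1}$ on the diagonal and $-\ell_{ij}^{-1}$ off-diagonal. A direct computation gives the standard identity
\[
  u^\top L_0 u \;=\; \sum_{1\le i<j\le n}\ell_{ij}^{-1}(u_i-u_j)^2 \;\ge\; 0,
\]
so $L_0\succeq 0$, hence $L\succeq 0$ and $K_L\succeq 0$.

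For the positive definiteness of $N(\theta)$, a direct calculation with the block rotation matrix shows $P(\theta)^\top = P(-\theta)$, so $P(\theta)\Gamma_0 P(-\theta) = P(\theta)\Gamma_0 P(\theta)^\top$ is congruent to $\Gamma_0\succ 0$, and therefore the $6\times 6$ trailing block of $\Gamma(\theta)$ is positive definite (and $\Gamma(\theta)\succeq 0$). Now take any $v\in\m R^{2n+4}$ with $v^\top N(\theta)v=0$. Since $v^\top K_L v\ge 0$ and $v^\top \Gamma(\theta) v\ge 0$, both must vanish. Vanishing of $v^\top \Gamma(\theta) v$, combined with positive definiteness of the trailing block, forces the last six components $v_{n\al},v_{n\beta},v_f,v_D,v_g,v_Q$ to be zero. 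Vanishing of $v^\top L v$ together with the sum-of-squares identity above forces $v_{i\al}$ and $v_{i\beta}$ to be constant on each connected component of the circuit graph $(\{1,\dots,n\},\{ij:\ell_{ij}<\infty\})$. Under the physically implicit assumption that this graph is connected, the already-established equalities $v_{n\al}=v_{n\beta}=0$ propagate to all nodes, giving $v=0$. Hence $N(\theta)\succ 0$ for every $\theta$.

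The only delicate point is the last step: $K_L$ is genuinely degenerate (its null space contains the all-ones vectors on the $\al$- and $\beta$-layers), so the positive definiteness must come from the interaction between $K_L$ and $\Gamma(\theta)$ through the shared coordinates $\Psi_{n\al},\Psi_{n\beta}$. The argument goes through because the trailing positive-definite block of $\Gamma(\theta)$ pins these two coordinates, and connectivity of the circuit graph then pulls all remaining circuit coordinates down to zero. No finer property of $\Gamma_0$ (beyond positive definiteness) and no dependence on $\theta$ is needed, so the conclusion holds uniformly for $\theta\in[0,2\pi)$.
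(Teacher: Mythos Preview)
Your argument is correct and follows the same overall skeleton as the paper: show $K_L\succeq 0$ and $\Gamma(\theta)\succeq 0$ separately, then prove the null spaces intersect trivially by first using positive definiteness of the trailing $6\times 6$ block of $\Gamma(\theta)$ to kill the last six coordinates, and then using the structure of $L$ to kill the remaining circuit coordinates.

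The one genuine difference is in the last step. The paper argues that the leading $(2n-2)\times(2n-2)$ block $L_1$ of $L$ is \emph{strictly} diagonally dominant and hence positive definite, so once the $n\alpha,n\beta$ coordinates vanish the quadratic form $x^\top L_1 x=0$ forces $x=0$ directly. You instead use the sum-of-squares identity for the graph Laplacian and propagate the zero at node $n$ through the graph via connectivity. Your route is slightly more general: strict diagonal dominance of $L_1$ tacitly requires $\ell_{in}<\infty$ for every $i<n$ (each node directly linked to node $n$), whereas your argument only needs the circuit graph to be connected. Both hypotheses are implicit in the physical setup rather than stated explicitly, but yours is the weaker one; you were right to flag it.
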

\begin{proof}
  Recall that\cite{MA} for any diagonally dominant, symmetric matrix $A$, if all diagonal elements $a_{ii}\geq 0$, then $A\geq 0$. Moreover, if $A$ is strictly diagonally dominant and $a_{ii}>0$ for all $i$, then $A>0$. 

  Now by definition of $L$, easy to check it is diagonally dominant with positive diagonal elements, so $L\geq 0$. Certainly $\Ga(\theta)\geq 0$ for all $\theta$, so $N(\theta)\geq 0$.
  
  All left is to show $z^\top N(\theta)z=0$ implies $z=0$ for all $\theta$. Write 
  \[
      L=\begin{pmatrix}
        L_1 & L_2\\ L_2^\top & L_3
      \end{pmatrix},\quad L_1\in \m R^{(2n-2)\times (2n-2)},\quad  L_3\in \m R^{2\times 2},
  \]
  then $L_1$ is strictly diagonally dominant with positive diagonal elements, so $L_1>0$. Suppose 
  \[
    z^\top N(\theta)z=z^\top K_L z+z^\top\Ga(\theta)z =0,
  \]
  then $z^\top K_Lz=z^\top\Ga(\theta)z=0$. Suppose $z=(x;y)$ for $x\in \m R^{2n-2}$ and $y\in \m R^6$, then $z^\top \Ga(\theta)z=0$ implies $y=0$ and hence $z=(x;0)$. Now $ z^\top K_Lz=0$ implies $x=0$.
\end{proof}
\subsection{Reduction of General Case}\label{sec:reducofgeneral}
To solve the system \eqref{eq:oldsys}, similarly as in Ref.~\cite{ENSS}, notice that $K_R$ could be singular since $r_i^{-1}$ could be zero for some $2\leq i\leq n$. Denote by
\[
    \La_1:=\{2i-1,\; 2i: r_i=+\infty,\; 1\leq i\leq n\},\quad \La_2:=\{1,\dots,2n+4\}-\La_1,
\]
i.e. $\La_1$ consists of those indexes $1\leq k\leq 2n+4$ s.t. the $k$-th diagonal element of $K_R$ is zero. Since $0<r_1,r_f,r_D,r_g,r_Q<+\infty$, we have $\{1,2,2n+1,2n+2,2n+3,2n+4\}\subset \La_2$. 

For each $k\in \La_1$, easy to check that the $k$-th element of $f(t)$ is always zero, so the first equation of \eqref{eq:oldsys} shows that given any $\theta$, $\Psi_k$ is a linear combination of $\Psi_j: j\in \La_2$ and thus for each $1\leq i\leq 2n+4$, $\Psi_i$ is a linear combination of $\Psi_j: j\in \La_2$. Write these facts in form of matrices,
\[
    \bm{\Psi}(t)=A(\theta(t))\wt{\bm\Psi}(t),\quad \wt{\bm\Psi}:=(\Psi_k: k\in \La_2),\quad \forall t.
\]
\begin{remark}
  Though $A(\theta(t))$ is given by \eqref{eq:Atheta}, we will see later it is also a linear combination of $\sin\theta,\cos\theta,\sin 2\theta,\cos 2\theta$, making it easier to calculate.
\end{remark}

In the following we prove that \eqref{eq:oldsys} is equivalent to the reductive system
\begin{equation}\label{eq:newsys}
  \left\{\begin{aligned}
  &\wt{K}_R \dot{\wt{\bm{\Psi}}}+\wt{N}(\theta)\wt{\bm{\Psi}}=\wt{f}(t),\\     
  &J\ddot{\bm{\theta}}+D\dot{\bm{\theta}}+K\bm{\theta}+\frac{1}{2}\wt{\bm{\Psi}}^{\top} \frac{d \wt{N}(\theta)}{d {\theta}} \wt{\bm{\Psi}}=T,
  \end{aligned}
  \right.    
\end{equation}
where naturally if we write $K_R=\diag(K_{R,1},\dots,K_{R,2n+4})$ and $f=(f_1,\dots,f_{2n+4})^\top$, then 
\begin{align}\label{eq:Ntheta}
    \wt{K}_R:&=\diag(K_{R,k}: k\in \La_2), \notag \\
    \wt{f}(t):&=(f_k: k\in \La_2)^\top, \notag\\
    \wt{N}(\theta):&=A^\top(\theta)N(\theta)A(\theta).
\end{align}

\begin{lemma}\label{lem:Azero}
  Suppose that 
  \[
      N=\binom{N_1}{N_2}
  \]
  is positive definite and $N_j\in \m R^{n_j\times n}$ for $j=1,2$, then there is a unique $A_0\in \m R^{n_1\times n_2}$ s.t. $A=\dbinom{A_0}{I_{n_2}}$ and $N_1A=0$. Moreover, let $\wt{N}:=A^\top N A$, then $\wt{N}=N_2A$ and $\wt N>0$.
\end{lemma}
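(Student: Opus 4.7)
The plan is to read the constraint $N_1 A = 0$ as a linear system for the unknown block $A_0$, solve it via invertibility inherited from positive definiteness of $N$, and then do two short calculations for $\wt N = N_2 A$ and $\wt N > 0$.

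First I would partition $N_1$ conformably to the row--block structure of $A$, writing $N_1 = (N_{11},\ N_{12})$ with $N_{11}\in\m R^{n_1\times n_1}$ and $N_{12}\in\m R^{n_1\times n_2}$. Since $A=\binom{A_0}{I_{n_2}}$, the equation $N_1 A = 0$ becomes $N_{11}A_0 + N_{12}=0$. The key observation is that $N_{11}$ is the leading principal submatrix of the (symmetric) positive definite matrix $N$, hence itself symmetric positive definite, and in particular invertible. This immediately yields the unique solution $A_0 = -N_{11}^{-1}N_{12}$.

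For the identity $\wt N = N_2 A$, I would feed $N_1 A = 0$ back into the block product
\[
NA = \begin{pmatrix} N_1 A \\ N_2 A \end{pmatrix} = \begin{pmatrix} 0 \\ N_2 A \end{pmatrix},
\]
and then multiply on the left by $A^\top = (A_0^\top,\ I_{n_2})$; the $A_0^\top$ block is annihilated by the top zero block of $NA$, leaving exactly $N_2 A$. For $\wt N > 0$, symmetry follows from $\wt N^\top = A^\top N^\top A = A^\top N A = \wt N$, and for strict definiteness I would note that $Ay = \binom{A_0 y}{y}$ has bottom block equal to $y$ itself, so $Ay\neq 0$ whenever $y\neq 0$; then $y^\top \wt N y = (Ay)^\top N (Ay) > 0$ by positive definiteness of $N$.

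There is no real technical obstacle in the argument; it is pure block linear algebra. The only subtlety worth flagging is the implicit use of symmetry of $N$, which is used twice — once to guarantee that the principal submatrix $N_{11}$ is positive definite (hence invertible), and once to transfer positive definiteness from $N$ to $\wt N$ via the congruence $A^\top N A$. Both uses are legitimate in the setting of the paper since $N(\theta)=K_L+\Ga(\theta)$ is manifestly symmetric.
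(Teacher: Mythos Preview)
Your proof is correct and follows essentially the same approach as the paper: partition $N_1=(N_{11},N_{12})$, use $N_{11}>0$ to solve $A_0=-N_{11}^{-1}N_{12}$, compute $\wt N$ from $A^\top\binom{0}{N_2A}$, and deduce $\wt N>0$ from full column rank of $A$. The paper is terser (it just says ``certainly $\rank A=n_2$'' where you spell out $Ay=\binom{A_0y}{y}\neq 0$), but the argument is the same.
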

\begin{proof}
  Suppose $N_1=(N_{11},N_{12})$ where $N_{ij}\in \m R^{n_i\times n_j}$, then $N_{11}>0$ since $N>0$. Thus the equation $0=N_1A=N_{11}A_0+N_{12}$ has a unique solution. Certainly $\rank A=n_2$ and hence $\wt{N}>0$. Now 
  \[
      \wt N=\begin{pmatrix}
        A_0^\top & I_{n_2}
      \end{pmatrix}
      \cdot \binom{0}{N_2A}=N_2A.
  \]
\end{proof}
\begin{lemma}\label{lem:Ntheta}
  Suppose that
  \[
      N(\theta)=\binom{N_1(\theta)}{N_2(\theta)} 
  \]
  is positive definite for all $\theta\in\mathbb{R}$, $A(\theta)=\dbinom{A_0(\theta)}{I_{n_2}}$ and $\wt N(\theta)=A^\top(\theta)N(\theta)A(\theta)$ as determined in lemma \ref{lem:Azero}, then for all $x\in \m R^{n_2}$, let $y(\theta):=A(\theta)x$, we have 
  \begin{align*}
    y^\top(\theta) N(\theta) y(\theta)&=x^\top \wt N(\theta)x,\\
    y^\top(\theta) \frac{d N(\theta)}{d\theta} y(\theta)&=x^\top \frac{d \wt N(\theta)}{d\theta} x.
  \end{align*}
  for all $\theta$.
\end{lemma}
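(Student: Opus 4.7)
The plan is to prove the two identities in turn, with the first being an immediate computation and the second requiring a careful bookkeeping of which cross terms vanish.

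For the first identity, I would simply substitute $y(\theta)=A(\theta)x$ into $y^\top N y$, giving $x^\top A^\top N A x = x^\top\wt N(\theta)x$ by definition of $\wt N$. This is a one-line verification with no structural input required.

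For the second identity, I would differentiate $\wt N = A^\top N A$ with respect to $\theta$ using the product rule to obtain
\[
\frac{d\wt N}{d\theta} = \dot A^\top N A + A^\top \dot N A + A^\top N \dot A,
\]
and therefore
\[
x^\top\frac{d\wt N}{d\theta}x = (\dot A x)^\top N (Ax) + y^\top \dot N y + (Ax)^\top N (\dot A x).
\]
The target is $y^\top\dot N y$, so the plan reduces to showing that the two outer (symmetric) terms vanish for every $x$. This is where the structural consequences of Lemma \ref{lem:Azero} enter: since $N_1(\theta)A(\theta)=0$ identically in $\theta$, the block product $N(\theta)A(\theta)=\binom{0}{N_2(\theta)A(\theta)}=\binom{0}{\wt N(\theta)}$. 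Simultaneously, because the lower block of $A(\theta)$ is the constant matrix $I_{n_2}$, we have $\dot A(\theta)=\binom{\dot A_0(\theta)}{0}$. Taking the inner product of these two block vectors gives $(\dot A x)^\top(NAx)=0$, which finishes the proof.

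The only step that is not completely mechanical is recognising that the two cross terms kill each other for the correct reason — namely the compatibility between the kernel condition $N_1 A\equiv 0$ and the fact that $A$ has a constant lower block, which together force $\dot A$ and $NA$ to live in complementary block positions. I do not expect any genuine obstacle; differentiability of $A(\theta)$ is tacit from the smooth dependence of $N(\theta)$ on $\theta$ together with the linear-solve formula $A_0(\theta)=-N_{11}(\theta)^{-1}N_{12}(\theta)$ from the proof of Lemma \ref{lem:Azero}, so the product rule is legitimate.
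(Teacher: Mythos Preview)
Your proposal is correct and follows essentially the same route as the paper: both arguments differentiate the identity $A^\top N A=\wt N$ (equivalently $y^\top N y=x^\top\wt N x$) and kill the cross terms by pairing the block structure $NA=\binom{0}{N_2A}$ from $N_1A\equiv 0$ against $\dot A=\binom{\dot A_0}{0}$ coming from the constant lower block $I_{n_2}$. The only cosmetic difference is that the paper differentiates the scalar $y^\top N y$ whereas you differentiate the matrix $\wt N$ first and then sandwich by $x$; since $\dot y=\dot A\,x$, the two computations are identical.
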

\begin{proof}
  The first equation is trivial and by taking differentiation on both sides,
  \[
      \left(\frac{dy}{d\theta}\right)^\top Ny+y^\top \frac{dN}{d\theta} y + y^\top N \frac{dy}{d\theta} = x^\top \frac{d\wt N}{d\theta} x.
  \]
  Since $y=Ax=\binom{*}{x}$, we see $\frac{dy}{d\theta}=\binom{*}{0}$, which gives 
  \begin{align*}
    y^\top N \frac{dy}{d\theta}&=\left(\frac{dy}{d\theta}\right)^\top Ny\\
    &=(*,0)\binom{N_1}{N_2}Ax\\
    &=(*,0)\binom{0}{N_2A}x=0.
  \end{align*}
\end{proof}
Without loss of generality, by re-numbering nodes $1,\dots,n$, in this section we assume
\[
\La_1=\{1,\dots,m\},\quad \La_2=\{m+1,\dots,2n+4\}.
\]
\begin{theorem}\label{thm:oldnew}
	The original system \eqref{eq:oldsys} is equivalent to the reductive system \eqref{eq:newsys}. That is to say, 
	\begin{itemize}
		\item if $(\bm{\Psi},\bm{\theta})$ solves \eqref{eq:oldsys}, then $\big(\wt{\bm{\Psi}}:=(\Psi_k:k\in \La_2)^\top, \bm{\theta}\big)$ solves \eqref{eq:newsys}. 
		\item conversely if $\big(\wt{\bm{\Psi}},\bm{\theta}\big)$ solves \eqref{eq:newsys}, then $\big(\Psi(t):=A(\theta(t))\wt{\bm{\Psi}}(t),\bm{\theta}\big)$ solves \eqref{eq:oldsys} for the function
		\begin{equation}\label{eq:Atheta}
			A(\theta)=\dbinom{-N_{\La_1,\La_1}^{-1} N_{\La_1,\La_2}}{I_{|\La_2|}},
		\end{equation}
		where $N_{\La_i,\La_j}:=(N_{k,\ell})_{k\in \La_i,\ell\in \La_j}$ is the sub-matrix of $N$ consisting of those rows in $\La_i$ and columns in $\La_j$ and $|\La_2|$ is the cardinal of $\La_2$.
	\end{itemize}  
\end{theorem}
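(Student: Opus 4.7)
My plan is to split the flux equation of \eqref{eq:oldsys} row-wise according to the partition $\{1,\dots,2n+4\}=\La_1\cup\La_2$ and observe that the $\La_1$-rows are purely algebraic. Since the diagonal of $K_R$ vanishes on $\La_1$ and $f_k(t)\equiv 0$ for every $k\in\La_1$, the first line of \eqref{eq:oldsys} restricted to those rows collapses to the constraint
\[
N_{\La_1,\La_1}(\theta)\bm{\Psi}_{\La_1}+N_{\La_1,\La_2}(\theta)\wt{\bm{\Psi}}=0.
\]
Lemma \ref{lem:Npostive} makes $N(\theta)$ positive definite for every $\theta$, so its principal sub-matrix $N_{\La_1,\La_1}(\theta)$ is invertible and this constraint uniquely determines $\bm{\Psi}_{\La_1}=-N_{\La_1,\La_1}^{-1}N_{\La_1,\La_2}\wt{\bm{\Psi}}$. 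In block form this is exactly $\bm{\Psi}=A(\theta)\wt{\bm{\Psi}}$ with $A(\theta)$ given by \eqref{eq:Atheta}, and this $A$ coincides with the matrix produced by Lemma \ref{lem:Azero} applied to the row splitting of $N(\theta)$ into blocks $N_1(\theta)$ of $|\La_1|$ rows and $N_2(\theta)$ of $|\La_2|$ rows.

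Both implications are short once this identification is in hand. In the forward direction a solution $(\bm{\Psi},\bm{\theta})$ of \eqref{eq:oldsys} automatically satisfies $\bm{\Psi}=A(\theta)\wt{\bm{\Psi}}$ by the step above; the $\La_2$-rows of the flux equation give $\wt{K}_R\dot{\wt{\bm{\Psi}}}+N_2(\theta)A(\theta)\wt{\bm{\Psi}}=\wt{f}(t)$, and the identity $N_2A=\wt N$ from Lemma \ref{lem:Azero} turns this into the first line of \eqref{eq:newsys}. The mechanical equation then follows by invoking Lemma \ref{lem:Ntheta} to replace $\bm{\Psi}^\top(dN/d\theta)\bm{\Psi}$ with $\wt{\bm{\Psi}}^\top(d\wt N/d\theta)\wt{\bm{\Psi}}$. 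Conversely, given a solution $(\wt{\bm{\Psi}},\bm{\theta})$ of \eqref{eq:newsys} I set $\bm{\Psi}:=A(\theta)\wt{\bm{\Psi}}$ and check the two row-blocks of \eqref{eq:oldsys} separately: the $\La_1$-rows hold trivially because $K_{R,\La_1}\equiv 0$, $f_{\La_1}\equiv 0$, and $N_1A\equiv 0$ by the construction of $A$; the $\La_2$-rows recover the first line of \eqref{eq:newsys} once one uses $\bm{\Psi}_{\La_2}=\wt{\bm{\Psi}}$ (so $\dot{\bm{\Psi}}_{\La_2}=\dot{\wt{\bm{\Psi}}}$ with no chain-rule contribution) together with $N_2A=\wt N$. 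The mechanical equation transfers back identically via Lemma \ref{lem:Ntheta}.

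The single point that merits care is the treatment of $\dot{\bm{\Psi}}$ in the reverse direction: differentiating $\bm{\Psi}=A(\theta(t))\wt{\bm{\Psi}}(t)$ produces a spurious term $\frac{dA}{d\theta}(\theta)\dot\theta\,\wt{\bm{\Psi}}$ that would contaminate the $\La_1$-rows through $K_R\dot{\bm{\Psi}}$ if $K_R$ were fully invertible. This term is harmlessly annihilated by the zero block $K_{R,\La_1}$, which is precisely why the purely algebraic identity $N_1A\equiv 0$ is strong enough to close the $\La_1$-block. Beyond this bookkeeping, the proof is a direct assembly of Lemmas \ref{lem:Npostive}, \ref{lem:Azero} and \ref{lem:Ntheta}, and I do not expect any further obstacle.
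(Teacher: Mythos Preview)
Your proposal is correct and follows exactly the approach the paper intends: the paper's own proof is a one-line ``routine'' remark that reduces everything to Lemmas~\ref{lem:Npostive}, \ref{lem:Azero} and \ref{lem:Ntheta} via the identification $\bm{\Psi}=A(\theta)\wt{\bm{\Psi}}$, and you have simply spelled out the row-block bookkeeping (including the harmless chain-rule term annihilated by $K_{R,\La_1}=0$) that the paper leaves implicit.
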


\begin{proof}
  This is routine by noticing that $\Psi(\theta(t))=A(\theta(t))\wt{\Psi}(t)$ is equivalent to $\wt\Psi=(\Psi_k: k\in \La_2)^\top$ for some $A=\dbinom{A_0}{I_{2n+4-m}}$. Now treat $\wt{\Psi}$ and $\Psi$ as $x$ and $y$ in lemma \ref{lem:Ntheta}, respectively.
\end{proof}
So now our target is to solve system \eqref{eq:newsys}, which requires the calculation of $A(\theta)$ and $\wt N(\theta)$ for each $\theta$. By theorem \ref{thm:oldnew} that is to inverse $N_{\La_1,\La_1}(\theta)$ for each $\theta$, which would be time consuming as one can guess. The following observation accelerates this process.

\begin{lemma}
  Denote $N_{\La_i,\La_j}$ simply by $N_{ij}$, then each element of $A_0(\theta)=-N_{11}^{-1}(\theta)\cdot N_{12}(\theta)$ and $\wt N(\theta)$ is a linear combination of $\sin\theta,\;\cos\theta,\;\sin 2\theta,\;\cos 2\theta$.
\end{lemma}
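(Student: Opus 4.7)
The plan is to combine a componentwise computation --- showing that $N(\theta)=K_L+\Gamma(\theta)$ lies entrywise in $\mathcal T:=\mathrm{span}\{1,\cos\theta,\sin\theta,\cos 2\theta,\sin 2\theta\}$ --- with a structural observation that $\Lambda_1$ is entirely disjoint from the support of $\Gamma(\theta)$, so that no inverse of a $\theta$-dependent matrix ever has to be formed. For the first half, I would unpack $\Gamma(\theta)=\mathrm{diag}\bigl(0_{(2n-2)\times(2n-2)},\,P(\theta)\Gamma_0 P(-\theta)\bigr)$, partition $\Gamma_0$ conformably with the $2$-plus-$4$ block form of $P(\theta)$, and expand. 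Writing $R(\theta)$ for the $2\times 2$ rotation block of $P(\theta)$, the upper-left block $R(\theta)(\Gamma_0)_{11}R(\theta)^{-1}$ is quadratic in $\sin\theta,\cos\theta$, and by the identities $\cos^2\theta=(1+\cos 2\theta)/2$, $\sin^2\theta=(1-\cos 2\theta)/2$, $2\sin\theta\cos\theta=\sin 2\theta$ its entries lie in $\mathcal T$. The off-diagonal blocks $R(\theta)(\Gamma_0)_{12}$ and $(\Gamma_0)_{21}R(\theta)^{-1}$ are linear in $\sin\theta,\cos\theta$, and $(\Gamma_0)_{22}$ is constant. Adding the constant $K_L$ places every entry of $N(\theta)$ inside $\mathcal T$.

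Next, observe that $\Gamma(\theta)$ is supported on the last six rows and columns --- the two $\alpha,\beta$ components of the generator-connected node together with the four windings $f,D,g,Q$. Since $\{1,2,2n+1,\ldots,2n+4\}\subset\Lambda_2$ and, under the re-numbering convention $\Lambda_1=\{1,\ldots,m\}$ announced just before Theorem \ref{thm:oldnew}, the two indices of the generator-connected node also lie in $\Lambda_2$, every index in $\Lambda_1$ points into the upper-left zero block of $\Gamma(\theta)$. Hence $N_{11}(\theta)=K_L|_{\Lambda_1\times\Lambda_1}$, $N_{12}(\theta)=K_L|_{\Lambda_1\times\Lambda_2}$ and $N_{21}(\theta)=K_L|_{\Lambda_2\times\Lambda_1}$ are constant matrices, so $A_0(\theta)=-N_{11}^{-1}N_{12}$ is itself a product of constant matrices, hence constant --- and, a fortiori, in $\mathcal T$ entrywise. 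For $\wt N(\theta)$, Lemma \ref{lem:Azero} gives $\wt N(\theta)=N_2(\theta)A(\theta)=N_{21}A_0+N_{22}(\theta)$; the first summand is constant, while $N_{22}(\theta)=K_L|_{\Lambda_2\times\Lambda_2}+\Gamma(\theta)|_{\Lambda_2\times\Lambda_2}$ has entries in $\mathcal T$ by the first step, so every entry of $\wt N(\theta)$ lies in $\mathcal T$ as well.

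The main obstacle is the structural step: one has to confirm that the two indices associated to the generator-connected node sit in $\Lambda_2$ after the re-numbering, so that $N_{11}(\theta)$ is genuinely $\theta$-independent. Without this, $N_{11}(\theta)$ would live in $\mathcal T$ but not be constant, and its inverse would generically be a rational --- rather than polynomial --- expression in $\sin\theta,\cos\theta$, and the claimed $\mathcal T$-span description of $A_0(\theta)$ would fail. With the structural fact in hand, the rest is bookkeeping, and the avoidance of any $\theta$-dependent matrix inversion is exactly the simplification foreshadowed by the remark earlier in this section.
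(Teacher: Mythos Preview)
Your argument handles only the easy case and has a genuine gap in the structural step. The claim that ``the two indices of the generator-connected node also lie in $\Lambda_2$'' is not forced by the re-numbering convention: membership in $\Lambda_1$ versus $\Lambda_2$ is determined by whether the grounding resistance at that node is infinite, and the re-numbering merely relabels indices so that $\Lambda_1=\{1,\dots,m\}$ --- it does not move the generator node out of $\Lambda_1$. In fact, in the paper's own specific circuit of Section~\ref{sec:specialcase} the generator-connected node (node~$3$) has $r_3=+\infty$ in every stage, so its two indices lie in $\Lambda_1$. Thus $N_{11}(\theta)$ is \emph{not} constant in the very example the lemma is designed for, and your reduction to ``a product of constant matrices'' collapses.

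The paper's proof splits into exactly the two cases you implicitly distinguish. When the generator node has finite grounding resistance your argument is fine and matches the paper's one-line treatment. The substantive work is the other case, $r=+\infty$: there $N_{11}(\theta)$ contains the $2\times 2$ block $E(\theta)\Gamma_1E(-\theta)$, and one must invert a genuinely $\theta$-dependent matrix. The key idea you are missing is that the circuit inductance matrix $L$ has a special isotropic block form (each $2\times 2$ block is a scalar multiple of $I_2$), so after a constant congruence the $\theta$-dependence in $N_{11}$ sits in a single $2\times 2$ block of the form $E(\theta)\Gamma_1E(-\theta)+\lambda I_2$; conjugating by $E(\theta)$ renders this constant, and the rotation factors then cancel against the $E(\theta)$ appearing in $N_{12}$. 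This cancellation is what keeps $A_0(\theta)$ and $\widetilde N(\theta)$ inside $\mathcal T$ rather than producing rational functions of $\sin\theta,\cos\theta$, and it cannot be avoided.
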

\begin{proof}
  Suppose the node next to the generator (e.g. node $n$ in figure \ref{fig:genecircuit} and node $3$ in figure \ref{fig:simpcircuit}) is connected to ground by some resistance $r<+\infty$, then $N_{11},N_{12}$ are constant. So $A_0$ is also constant and
  \[
    \wt N(\theta)=\begin{pmatrix}
      I & A_0^\top
    \end{pmatrix}N\begin{pmatrix}
      A_0\\I
    \end{pmatrix}
  \]
  a linear combination of $\sin\theta,\dots,\cos 2\theta$. If $r=+\infty$, then by definition of $\Lambda_1$ and $\Lambda_2$, 
  \[
  \Ga(\theta)=\begin{pmatrix}
    0 & 0 & 0 & 0\\
    0 & E(\theta)\Ga_1E(-\theta) & E(\theta)\Ga_2 & 0\\
    0 & \Ga_2^\top E(-\theta) & \Ga_3 &0\\
    0 & 0 & 0 & 0
  \end{pmatrix},\quad E(\theta)=\begin{pmatrix}
    \cos\theta & -\sin\theta\\
    \sin\theta &\cos\theta
  \end{pmatrix},\quad \Ga_0=\begin{pmatrix}
    \Ga_1 & \Ga_2\\
    \Ga_2^\top &\Ga_3
  \end{pmatrix}
  \]
  for some constants $\Ga_1\in \m R^{2\times 2},\; \Ga_2\in \m R^{2\times 4}$ and $\Ga_3\in \m R^{4\times 4}$. Now by $N=K_L+\Ga(\theta)$ and definition of $L$, 
  \[
    N_{11}=\begin{pmatrix}
      A & B\\ B^\top & \la_0 I_2
    \end{pmatrix}+
    \begin{pmatrix}
      0 & 0\\0 &E(\theta)\Ga_0 E(-\theta)
    \end{pmatrix}
  \]
  for constants $\la_0>0$ and $A,B$ (here $A$ represents \emph{not} $\binom{A_0}{I}$). Direct computation gives 
 \begin{equation}\label{eq:Noneone}
      Q^\top N_{11}Q=\begin{pmatrix}
        A &  \\  & P(\theta)\Ga_0 P(-\theta)+ C
      \end{pmatrix},\quad Q=\begin{pmatrix}
        I & -A^{-1}B\\ & I
      \end{pmatrix},\quad 
      C=\la_0I_2-B^\top A^{-1} B.
  \end{equation}
  We show that $C=\la I_2$ for some $\la>0$. Actually, by definition of $L$ we have for some $\la_{ij}=\la_{ji}$
  \[
      A=\begin{pmatrix}
        \la_{11}I_2&\dots&\la_{1k}I_2\\
        \vdots & &\vdots\\
        \la_{k1}I_2 & \dots & \la_{kk}I_2
      \end{pmatrix},\quad B=\begin{pmatrix}
        \la_1I_2 \\
        \vdots \\
        \la_k I_2 
      \end{pmatrix},
    \]
    which implies that
    \[
      A^{-1}=\begin{pmatrix}
        \mu_{11}I_2&\dots&\mu_{1k}I_2\\
        \vdots & &\vdots\\
        \mu_{k1}I_2 & \dots & \mu_{kk}I_2
      \end{pmatrix},\quad 
      B^\top A^{-1}B=
        \sum_{i,j=1}^k \la_i\mu_{ij}\la_j I_2.
    \]
    So $\la=\la_0-\sum_{i,j=1}^k \la_i\mu_{ij}\la_j$. Inserting this into \eqref{eq:Noneone} gives 
    \[
    N_{11}^{-1}=\begin{pmatrix}
      I & -B^\top A^{-1} E(-\theta)\\ & E(-\theta)
    \end{pmatrix}\begin{pmatrix}
      A^{-1} & \\ & \la^{-1}I_2
    \end{pmatrix}\begin{pmatrix}
      I & \\ -E(\theta) A^{-1} B & E(\theta)
    \end{pmatrix}.
    \]
    Since $N_{12}=*+D$ for some constant matrix $*$ and
    \[
    D=\begin{pmatrix}
      0 & 0\\ E(\theta) \Ga_2 & 0
    \end{pmatrix},
    \]
    we see $A_0(\theta)=(-N_{11}^{-1}\cdot *) -N_{11}^{-1} D$ with the first term $N_{11}^{-1}\cdot *$ a combination of $\sin\theta,\dots,\cos2\theta$ and the second term being 
    \[
    N_{11}^{-1} D=\begin{pmatrix}
      -\la^{-1} B^\top A^{-1} E(\theta) \Ga_2 & 0\\
      \la^{-1} E(\theta) \Ga_2 & 0
    \end{pmatrix},
    \]
    merely a combination of $\sin\theta$ and $\cos\theta$. Now $\wt N=N_{22}-N_{12}^\top N_{11}^{-1} N_{12}$ with $N_{22}$ constant and 
    \begin{align*}
      N_{12}^\top N_{11}^{-1} N_{12} &= (*+D^\top) N_{11}^{-1} (*+D)\\
      &=*N_{11}^{-1}*+*N_{11}^{-1} D+ D^\top N_{11}^{-1} * + D^\top N_{11}^{-1} D,
    \end{align*}
    where 
    \[
        D^\top N_{11}^{-1} D = \begin{pmatrix}
          \la^{-1} \Ga_2^\top \Ga_2 & 0\\
          0 & 0
        \end{pmatrix}
    \]
    is constant. This shows $\wt N(\theta)$ is a combination of $\sin\theta,\dots,\cos2\theta$.
\end{proof}
Direct computation gives
\begin{theorem}\label{thm:fourier}
  If 
  \[
    B(\theta)=S_0+S_1\sin\theta+C_1\cos\theta+S_2\sin 2\theta+C_2\cos 2\theta
  \]
  for all $\theta\in\mathbb{R}$ and $S_0,\dots,C_2$ are all constant, then 
  \begin{align*}
    8S_0&=\sum_{k=0}^3 B(\frac{k\pi}{2})+B(-\frac{k\pi}{2}),\\
    S_1&=\frac{1}{2}B(\frac{\pi}{2})-\frac{1}{2}B(-\frac{\pi}{2}),\\
    C_1&=\frac{1}{2}B(0)-\frac{1}{4}B(\pi)-\frac{1}{4}B(-\pi),\\
    S_2&=\frac{1}{2}B(\frac{\pi}{4})-\frac{1}{2}B(-\frac{\pi}{4})-\frac{\sqrt{2}}{2}S_1,\\
    C_2&=\frac{1}{2}B(0)+\frac{1}{4}B(\pi)+\frac{1}{4}B(-\pi)-S_0.
   \end{align*}
\end{theorem}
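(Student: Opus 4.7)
The plan is to prove each identity by direct substitution, exploiting the parity structure of $\sin$ and $\cos$ at the chosen sample points so that only a few unknowns survive each combination. Because the statement is a finite linear system (five unknowns $S_0,S_1,C_1,S_2,C_2$) and the claimed formulas are explicit, there is no real obstacle; the only task is to choose the extraction order so that each coefficient is recovered from quantities already determined.

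First I would handle the "purely cosine" angles $\theta\in\{0,\pi,-\pi\}$. At these points $\sin\theta=0$ and $\sin 2\theta=0$, while $\cos\theta=1,-1,-1$ and $\cos 2\theta=1,1,1$, so
\[
B(0)=S_0+C_1+C_2,\qquad B(\pi)=B(-\pi)=S_0-C_1+C_2.
\]
Taking $\tfrac12 B(0)-\tfrac14 B(\pi)-\tfrac14 B(-\pi)$ cancels $S_0$ and $C_2$ and yields the identity for $C_1$. Next, at $\theta=\pm\pi/2$ one has $\cos\theta=0$, $\sin 2\theta=0$, $\cos 2\theta=-1$, giving $B(\pm\pi/2)=S_0\pm S_1-C_2$; the antisymmetric combination $\tfrac12[B(\pi/2)-B(-\pi/2)]$ kills $S_0$ and $C_2$ and isolates $S_1$.

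For $S_0$ I would verify the $8$-point average formula by summing the four symmetric pairs $B(k\pi/2)+B(-k\pi/2)$, $k=0,1,2,3$. In each pair the odd harmonics ($S_1\sin$) cancel by parity, while the remaining cosine contributions form a standard discrete Fourier sum that annihilates $\cos\theta$ and $\cos 2\theta$ (since $\sum_{k=0}^{3}\cos(k\pi/2)=0$ and $\sum_{k=0}^{3}\cos(k\pi)=0$), leaving only $8S_0$. Once $S_0$ is known, the symmetric combination $\tfrac12 B(0)+\tfrac14 B(\pi)+\tfrac14 B(-\pi)-S_0$ uses the same evaluations that gave $C_1$ but now kills $C_1$ and extracts $C_2$.

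Finally, for $S_2$ I would evaluate at $\theta=\pm\pi/4$, where $\sin\theta=\pm\tfrac{\sqrt2}{2}$, $\cos\theta=\tfrac{\sqrt2}{2}$, $\sin 2\theta=\pm1$, $\cos 2\theta=0$, giving
\[
B(\tfrac{\pi}{4})-B(-\tfrac{\pi}{4})=\sqrt{2}\,S_1+2S_2,
\]
so $\tfrac12[B(\pi/4)-B(-\pi/4)]-\tfrac{\sqrt2}{2}S_1=S_2$, using the value of $S_1$ already obtained. This completes the five identities; the only bookkeeping care needed is to perform the extractions in the order $C_1,S_1,S_0,C_2,S_2$ so that every formula refers only to quantities previously established.
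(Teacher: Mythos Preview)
Your proposal is correct and is exactly the ``direct computation'' the paper alludes to: evaluate $B$ at the stated sample points and combine them so that parity and the elementary discrete sums $\sum_{k=0}^{3}\cos(k\pi/2)=\sum_{k=0}^{3}\cos(k\pi)=0$ eliminate all but the desired coefficient. The only tiny wording fix is that in the $S_0$ step both odd terms $S_1\sin\theta$ and $S_2\sin2\theta$ (not just $S_1\sin\theta$) vanish under the symmetric pairing, but your computation already accounts for this.
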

\begin{remark}
  We can use \eqref{eq:Ntheta} and \eqref{eq:Atheta} to calculate values of $A$ and $\wt N$ at $\theta=0, \pm \pi/4, \pm \pi/2, \pm \pi, \pm 3\pi/2$ first then values at arbitrary $\theta$ by theorem \ref{thm:fourier}.
\end{remark}

\subsection{A Specific Case}\label{sec:specialcase}
\begin{figure}[ht]
  \centering
  \subfigure[A Circuit for Stage I,\;II and III.]{
   \ctikzset{resistors/scale=0.3}
    \begin{tikzpicture}[scale=0.55,american]
    \draw (-1,0) node[draw,circle,left]{$G$} to[short,-*] (0,0) node[above]{$3$} to[short,-*,L=$\ell_1$] (6,0) node[above]{$1$} -- (6,-2) to[short,-*,L=$\ell_2$] (3,-2) node[above]{$2$} to[L=$\ell_2$] (0,-2) -- (0,0);
    \draw (6,0) to[european,R=$r_1$] (8,0) -- (8,-2) node[ground]{} to[isource,l=$f(t)$] (6,-2);
    \draw (3,-2) to[european,R=$r_2$] (3,-3) node[tlground]{};
  \end{tikzpicture}
  }
  \subfigure[Stage I, $r_2=+\infty$.]{
   \ctikzset{resistors/scale=0.3}
    \begin{tikzpicture}[scale=0.55,american]
    \draw (-1,0) node[draw,circle,left]{$G$} to[short,-*] (0,0) node[above]{$3$} to[short,-*,L=$\ell_1$] (6,0) node[above]{$1$} -- (6,-2) to[short,-*,L=$\ell_2$] (3,-2) node[above]{$2$} to[L=$\ell_2$] (0,-2) -- (0,0);
    \draw (6,0) to[european,R=$r_1$] (8,0) -- (8,-2) node[ground]{} to[isource,l=$f(t)$] (6,-2);
  \end{tikzpicture}
  }

  \subfigure[Stage II, $r_2=0$.]{
   \ctikzset{resistors/scale=0.2}
    \begin{tikzpicture}[scale=0.55,american]
    \draw (-1,0) node[draw,circle,left]{$G$} to[short,-*] (0,0) node[above]{$3$} to[short,-*,L=$\ell_1$] (6,0) node[above]{$1$} -- (6,-2) to[short,-*,L=$\ell_2$] (3,-2) node[ground]{} to[L=$\ell_2$] (0,-2) -- (0,0);
    \draw (6,0) to[european,R=$r_1$] (8,0) -- (8,-2) node[ground]{} to[isource,l=$f(t)$] (6,-2);
    \node at (3,-2) [above]{$2$};
  \end{tikzpicture}
  }
  \subfigure[Stage III, $\ell_2=+\infty$.]{
   \ctikzset{resistors/scale=0.3}
    \begin{tikzpicture}[scale=0.55,american]
    \draw (-1,0) node[draw,circle,left]{$G$} to[short,-*] (0,0) node[above]{$3$} to[short,-*,L=$\ell_1$] (6,0) node[above]{$1$} to[european,R=$r_1$] (8,0) -- (8,-2) node[ground]{} to[isource,l=$f(t)$] (6,-2) -- (6,0);
    \draw (3,-2) node[above]{$2$} to[short,*-] (3,-2) node[ground]{};
  \end{tikzpicture}
  }
  \caption{A Special Case.}
  \label{fig:simpcircuit}
\end{figure}
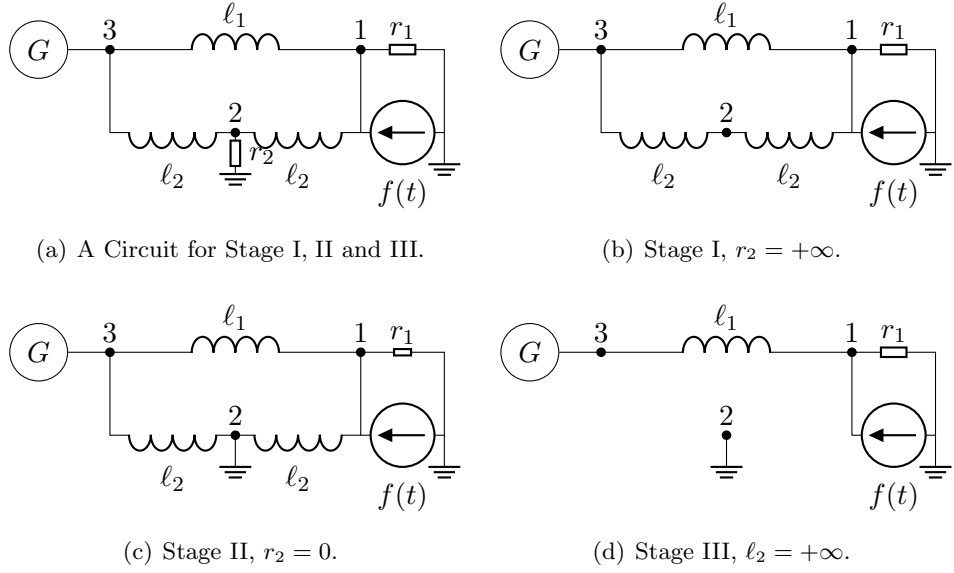
In this section we focus on a specific power system, which is a simplified example but complicated enough to model an electromagnetic transient system. As shown in figure \ref{fig:simpcircuit}, the power system was first running stably on stage I, then node $2$ is ground short and the system is transformed from stage I to II. After a short time $t_b$ (called the break time), usually less than one second, the lower branch would be removed leading the system to stage III. Each stage can be described by system \eqref{eq:oldsys} and the goal is to numerically simulate three stages, in order to find out the maximal possible $t_b$ such that stage III would be asymptotically stable. 

Rigorously speaking, node $2$ in stage II is connected to the ground through a resistance $r_2\to 0^+$ making the corresponding element $r_2^{-1}\to +\infty$ in the coefficient matrix $K_R$. In engineering numerical simulation, usually such ``ground resistance'' $r_2$ would be given a small value e.g. $1\ \mathrm{m\Omega}$. Similarly when removing the lower branch, two inductances $\ell_2$ tend to infinity and would become a large value, say $10^6\ \mathrm{H}$, while these extreme values could lead to numerical oscillation.

To avoid this, we rigorously treat $r_2=0$ and $\ell_2=+\infty$. The former makes $r_2^{-1}=+\infty$ in $K_R$ and hence the system \eqref{eq:newsys} of stage II not an ordinary differential equation (ode) of real number coefficients. An explanation is that if $r_i\to 0^+$ for some $2\leq i\leq n$, then the $i\al,i\beta$-th columns of $K_R \dot{\bm\Psi}+N(\theta)\bm\Psi=f(t)$ become
\[
    r_i^{-1} \dot{\Psi}_{i\gamma}+ N_{i\gamma} \bm{\Psi}=0,\quad \gamma=\al,\beta
\]
where $N_{i\al},\; N_{i\beta}$ are the $i\al,i\beta$-th column of $N(\theta)$ respectively. So $\dot{\Psi}_{i\gamma}=-r_iN_{i\gamma}\bm{\Psi}\to 0$ and thus we just set $\dot{\Psi}_{i\gamma}\equiv 0$. Hence $\Psi_{i\gamma}$ is a constant and we set $\Psi_{i\gamma}\equiv 0$ since the flux linkage of a ground short node should be zero. In this sense, we can simply remove the $i\alpha,i\beta$-th rows of $\bm{\Psi},\dot{\bm{\Psi}}$ and $f(t)$, the $i\al,i\beta$-th rows and columns of $K_R, N(\theta)$ and $dN(\theta)/d\theta$. For convenience, we confuse notations such that the rest part of $\bm{\Psi},\dot{\bm{\Psi}},\dots, N, dN/d\theta$ are still be denoted by $\bm{\Psi},\dot{\bm{\Psi}},\dots, N, dN/d\theta$ themselves respectively, where now $K_R$ becomes a real matrix and \eqref{eq:newsys} an ode of real number coefficients.

For the later part $\ell_2=+\infty$, it makes $L$ in \eqref{eq:L} having more than one zero eigenvalues so that the reduction method in section \ref{sec:reducofgeneral} fails. However, this will happen only if $\sum_{k\neq i} \ell_{ik}^{-1}=0$ for some $i$, i.e. $\ell_{ik}=+\infty$ for some $i$ and all $k\neq i$. This shows that node $i$ is free from all other nodes, which would happen only if it is ground short so that it would has been removed and $\wt{N}$ is invertible still. 

In the following we list values of all parameters of electrical components and all corresponding matrices. For those constant in Stage I to III:
The matrix $\Ga_0$ is given by 
\[
  \Ga_0=\begin{pmatrix}
    L_d & 0 & L_{df} & L_{dD} & 0 & 0\\
    0 & L_q & 0 & 0 & L_{qg} & L_{qQ}\\
    L_{fd} & 0 & L_f & L_{fD} & 0 & 0\\
    L_{Dd} & 0 & L_{Df} & L_D & 0 & 0\\
    0 & L_{gq} & 0 & 0 & L_g & L_{gQ}\\
    0 & L_{Qq} & 0 & 0 & L_{Qg} & L_Q
  \end{pmatrix}^{-1}
\]
where
\begin{align*}
  L_d &= 0.00359582836822968\\
  L_{df} = L_{fd} = L_{dD} = L_{Dd} &=0.0235797400806847\\
  L_q &= 0.00343512095512444\\
  L_{qg}=L_{gq}=L_{qQ}=L_{Qq}&= 0.0224433670647480\\
  L_f&=0.172961353354511\\
  L_{fD} = L_{Df} &= 0.166733941096683\\
  L_D &=0.167286372829233\\
  L_g &=0.191442705861614\\
  L_{gQ} = L_{Qg} &= 0.158698570441422\\
  L_Q &= 0.168240573094545
\end{align*}
The inertial matrix is 
\[
  J=\diag(J_1,J_2,J_3,J_4,J_5,J_6)
\] 
for
\begin{gather*}
J_1=1156.56,\; J_2=1953.83, \; J_3=10782.84,\\
J_4=11103.62,\; J_5=10906.22,\; J_6=429.68.
\end{gather*}
The stiff matrix is 
\[
  K=\begin{pmatrix}
    K_1 & -K_1 & 0 & 0 & 0 & 0 \\
    -K_1 & K_1+K_2	& -K_2 & 0 & 0 & 0 \\
    0 & -K_2 & K_2+K_3 & -K_3 & 0 & 0 \\
    0 & 0 & -K_3 & K_3+K_4 & -K_4 & 0 \\
    0 & 0 & 0 & -K_4 & K_4+K_5 & -K_5 \\
    0 & 0 & 0 & 0 & -K_5 & K_5
    \end{pmatrix}
\]
for 
\begin{gather*}
K_1=45692300.27,\;K_2=82680741.64,\\
K_3=123179695.3,\;K_4=167728592,\;K_5=6679980.902.
\end{gather*}
The total kinetic energy vector is 
\[
    T=(T_1,T_2,T_3,T_4,0,0)^\top
\]
for
\[
    T_1=0.3T_0,\;T_2=0.26T_0,\;T_3=T_4=0.22T_0,\;T_0=2130673.909092358.
\]
The resistances near node $1$ and inside the generator are given by 
\begin{align*}
  r_1&=5\times 10^{-4}\ \Omega\\
  r_f&=0.0532343305911098\ \Omega\\
  r_D&=0.154680885113791\ \Omega\\
  r_g&=0.532343305911098\ \Omega\\
  r_Q&=0.311370612891397\ \Omega
\end{align*}
The vector of injecting current is 
\[
    f(t)=\left(\frac{U_s}{r_1}\cos(\om_s t),\frac{U_s}{r_1}\cos(\om_s t),0,\dots,0,\frac{U_f}{r_f},0,0,0\right)^\top
\]
with $U_s=2.6\times 10^4\ \mathrm{V},\; \om_s=120\pi\ \mathrm{rad}/\mathrm{s},\; U_f=373.7756\ \mathrm{V}$.

For those changing from Stage I to III:
The resistance matrices are  
\begin{gather*}
  K_R^{\mathrm{I}}=\diag(r_1^{-1},r_1^{-1},0,0,0,0,r_f^{-1},r_D^{-1},r_g^{-1},r_Q^{-1})\\
  K_R^{\mathrm{II}}=K_R^{\mathrm{III}}=\diag(r_1^{-1},r_1^{-1},+\infty,+\infty,0,0,r_f^{-1},r_D^{-1},r_g^{-1},r_Q^{-1})
\end{gather*}
where the head indexes represent the stage I, II and III respectively. The inductance matrices are given by $K_L^{\mathrm{I}}=K_L^{\mathrm{II}}=\begin{pmatrix}
    L^{\mathrm{I}} & 0\\0 & 0
  \end{pmatrix}$,
\[
L^{\mathrm{I}}=L^{\mathrm{II}}=\begin{pmatrix}
  \ell_1^{-1}+\ell_{2}^{-1} & 0 & -\ell_2^{-1} & 0 &-\ell_1^{-1} & 0\\
  0 & \ell_1^{-1}+\ell_2^{-1} & 0 &-\ell_2^{-1} & 0 & -\ell_1^{-1}\\
  -\ell_2^{-1} & 0 & 2\ell_2^{-1} & 0 & -\ell_2^{-1} & 0\\
  0 & -\ell_2^{-1} & 0 & 2\ell_2^{-1} & 0 & -\ell_2^{-1}\\
  -\ell_1^{-1} & 0 & -\ell_2^{-1} & 0 & \ell_1^{-1}+\ell_2^{-1} & 0\\
  0 & -\ell_1^{-1} & 0 & -\ell_2^{-1} & 0 & \ell_1^{-1}+\ell_2^{-1} 
\end{pmatrix}
\]
and 
\[
K_L^{\mathrm{III}}=\begin{pmatrix}
  L^{\mathrm{III}} & 0\\0 & 0
\end{pmatrix},\quad
L^{\mathrm{III}}=\begin{pmatrix}
  \ell_1^{-1} & 0 & 0 & 0 &-\ell_1^{-1} & 0\\
  0 & \ell_1^{-1}& 0 & 0 & 0 & -\ell_1^{-1}\\
  0 & 0 & 0 & 0 & 0 & 0\\
  0 & 0 & 0 & 0 & 0 & 0\\
  -\ell_1^{-1} & 0 & 0 & 0 & \ell_1^{-1} & 0\\
  0 & -\ell_1^{-1} & 0 & 0 & 0 & \ell_1^{-1}
\end{pmatrix}
\]
where $\ell_1=4\times 10^{-4}\ \mathrm{H},\;\ell_2=2\times 10^{-4}\ \mathrm{H}$.

As stated in beginning, the system was running stably on stage I, mathematically speaking, on its equilibrium operating point. However, the first equation of \eqref{eq:oldsys} is time-dependent so we need to transform it to a autonomous form. Actually\cite{LMMS}, \eqref{eq:oldsys} is transformed from the xy synchronous coordinate system
\begin{equation}\label{eq:xy}
\left\{
  \begin{aligned}
    &K_R\dot{\bm\vphi}+(K_L+\om_sK_jK_R+\Ga(\delta))\bm\vphi=f_0,\\
    &J\ddot{\bm\delta}+D\dot{\bm\delta}+K\bm\delta+\frac{1}{2}\bm{\vphi}^\top \frac{d\Ga(\delta)}{d\delta}\bm{\vphi}+D\bm{\om}_s=T
  \end{aligned} 
\right.
\end{equation}
where $f_0=f(0)=(U_s/r_1,0,0,\dots,0,U_f/r_f,0,0,0)^\top$, $\bm{\om}_s=\om_s(1,1,1,1,1,1)^\top$,
\[
  K_j=\diag\left(\begin{pmatrix}
    0 & -1 \\1 & 0
  \end{pmatrix},\dots,\begin{pmatrix}
    0 & -1 \\1 & 0
  \end{pmatrix},0,0,0,0\right)
\]
and still $\bm{\delta}=(\delta_1,\dots,\delta=\delta_5,\delta_6)^\top$ and $\bm{\vphi}^\top \frac{d\Ga(\delta)}{d\delta}\bm{\vphi}=(0,0,0,0,\bm{\vphi}^\top \frac{d\Ga(\delta)}{d\delta}\bm{\vphi},0)^\top$ is an abbreviation. In detail, fix a time $t$, \eqref{eq:xy} is transformed into \eqref{eq:oldsys} by 
\begin{equation}\label{eq:trans}
  \bm{\theta}=\bm{\delta}+t\bm{\om}_s,\quad \binom{\Psi_{i\al}}{\Psi_{i\beta}}=\begin{pmatrix}
    \cos(\om_s t) & -\sin(\om_s t)\\
    \sin(\om_s t) & \cos(\om_s t)
  \end{pmatrix}\binom{\vphi_{ix}}{\vphi_{iy}},\quad 1\leq i\leq n
\end{equation}
where $\bm{\vphi}=(\vphi_{1x},\vphi_{1y},\dots,\vphi_{nx},\vphi_{ny},\Psi_f,\Psi_D,\Psi_g,\Psi_Q)^\top$. 

For later use, we point out that if \eqref{eq:xy} is applied to stage III then there is a different equilibrium point (which is $47.421^\circ$ numerically) compared to that of stage I. Actually this $47.421^\circ$ represents the angle value to which the transient system should converge if it is finally stable.
\section{A Predictor-Corrector Method\label{sec:PCmethod}}
For completeness in this section we briefly recite a predictor-corrector method (P-C method), which is based on the actual physical phenomenon and is an explicit numerical method with relatively long stability.\cite{ENSS} We would compare this P-C method to our structure-preserving method mentioned next section numerically.

The system \eqref{eq:oldsys} is equivalent to 
\begin{equation}\label{eq:sysPC}
\left\{
  \begin{aligned}
  K_{E_1}\dot{x}_E&=-K_{E_2}(\bm\theta)x_E+g_E(t),\\     
  K_{M_1}\dot{x}_M&=-K_{M_2}x_M+g_M(\bm{\Psi},\bm\theta),
  \end{aligned}
\right.     
\end{equation}
where $x_E=(\dot{\bm{\Psi}}; \bm{\Psi}),\ x_M=(\dot{\bm{\theta}}; \bm{\theta})$ and 
\begin{align*}
\begin{split}
&K_{E_1}=\diag(0,I_{2n+4}),\ K_{E_2}(\bm\theta)
                            =\begin{pmatrix}
                            K_R & K_L+\Gamma(\theta) \\
                            -I_{2n+4} & 0 
                            \end{pmatrix},\      
g_E(t)=\begin{pmatrix} f(t) \\ 0 \end{pmatrix}, \\
&K_{M_1}=\mathrm{diag}(J,I_6),\ K_{M_2}=\left(\begin{array}{cc}
                                           D & K \\
                                           -I_6 & 0 
                                           \end{array}\right),\ 
g_M(\bm{\Psi},\bm{\theta})=\begin{pmatrix}
T-\frac{1}{2}\bm{\Psi}^{\top} \frac{d \Gamma(\theta)}{d \theta} \bm{\Psi} \\ 0 \end{pmatrix}.
\end{split}
\end{align*}
Here the script $E$ represents electromagnet and $M$ mechanic. 

Since in reality the mechanical part $\bm\theta$ changes much slower than the electromagnetic part $\bm\Psi$, 
\[
    \bm{\theta}_{n+1}^{[0]}=\bm{\theta}_n+h\dot{\bm{\theta}}_n
\]
is used to predict angles in time $t_{n+1}=(n+1)h$. With this, applying the finite difference scheme with coefficient $\beta\in[0,1]$ to the first equation of \eqref{eq:sysPC} reads
\begin{align*}
  &\left[K_{E_1}+\beta h K_{E_2}(\bm\theta_{n+1}^{[0]})\right]x_{E,n+1} \\
  &=\left[K_{E_1}-(1-\beta)h K_{E_2}(\bm\theta_n)\right]x_{E,n}+h\left((1-\beta)g_E(t_n)+\beta g_E(t_{n+1})\right).  
\end{align*}
Similarly, to the second equation of \eqref{eq:sysPC} yields
\begin{align*}
  &\left(K_{M_1}+\beta h K_{M_2}\right)x_{M,n+1} \\
  &=\left(K_{M_1}-(1-\beta) h K_{M_2}\right)x_{M,n}+h\left((1-\beta) g_M\left(\bm{\Psi}_n,\bm{\theta}_n\right)+\beta g_M\left(\bm{\Psi}_{n+1},\bm{\theta}_{n+1}^{[0]}\right)\right),
\end{align*}  
in which $(\dot{\bm{\theta}}_n;\bm{\theta}_{n+1}^{[0]})$ is ``corrected'' by $x_{M,n+1}$.

\section{Structure-Preserving Methods \label{sec:SPmethod}}
\subsection{Dirac Structure}
Let 
\[
x(t)=\begin{pmatrix} \dot{\wt{\bm{\Psi}}} \\ \wt{\bm{\Psi}} \\ \dot{\bm{\theta}} \\ \bm{\theta} \\ t \end{pmatrix},\ u(x)=\begin{pmatrix} \wt{f}(t) \\ 0 \\ T \\ 0 \\ 1
\end{pmatrix},\ y(x)=\begin{pmatrix}
\dot{\wt{\bm{\Psi}}} \\ 0 \\ \dot{\bm{\theta}} \\ 0 \\ 0 \end{pmatrix} 
\]
then \eqref{eq:newsys} can be written as 
\begin{equation}\label{eq:sysPH}
\begin{aligned}
M\dot{x}&=(P-Q)z(x)+(F-V)u(x), \\
y(x)&=(F+V)^{\top}z(x)+(S-W)u(x),
\end{aligned}
\end{equation}
where the coefficient matrices are given by
\[
  \begin{split}
  &M=\diag\left(0,I,J,I,1\right),\ F=\mathrm{diag}(I,0,I,0,1),\ V=S=W=0, \\  
  &P=\diag\left(\begin{pmatrix}
  0 & -I \\ I & 0
  \end{pmatrix},\begin{pmatrix}
    0 & -I \\ I & 0
    \end{pmatrix},0\right),\ Q=\mathrm{diag}\left(\wt{K}_R,0,D,0,0\right)
  \end{split}
\]
and 
\[
z(x)=\left(\begin{array}{c}
\dot{\wt{\bm{\Psi}}} \\ \wt{N}(\theta)\wt{\bm{\Psi}} \\
\dot{\bm{\theta}} \\
K\bm{\theta}+\frac{1}{2}\wt{\bm{\Psi}}^{\top} \frac{d \wt{N}(\theta)}{d \theta} \wt{\bm{\Psi}} \\ 0
\end{array}\right).
\]
By setting the skew-symmetric matrix $A$ and the non-negative definite matrix $B$ as 
\[
A:=\begin{pmatrix}
  P & F \\-F^\top & W
\end{pmatrix},\ 
B:=\begin{pmatrix}
  Q & V \\ V^\top & S
\end{pmatrix}
\]
system \eqref{eq:sysPH} is an autonomous port-Hamiltonian system according to Ref.~\cite[Definition 1]{pHDAE}, with a Dirac structure. 

\begin{definition}\label{Def. linear Dirac structure}
  Let $\mathcal{F}$ be an $n$-dimensional linear space and $\mathcal{E}=\mathcal{F}^{\ast}$ its dual space. In addition, $\mathcal{U}$ is another linear space of dimension $n$, $F,E$ are $n\times n$ matrices representing the linear maps $F:\mathcal{F}\to \mathcal{U}$ and $E:\mathcal{E}\to \mathcal{U}$, respectively. Therefore, a linear subspace
  \[
  \mathcal{D}=\left\{(v_f,v_e)\in \mathcal{F}\times \mathcal{E}\ |\ Fv_f+Ev_e=0\right\}\subseteq \mathcal{F}\times \mathcal{E}    
  \]
  is a Dirac structure, if the matrices $F,E$ satiesfy
  \[
  \begin{split}
  &(i)\  EF^{\top}+FE^{\top}=0, \\
  &(ii)\ \mathrm{rank}(F, E)=n.
  \end{split}
  \]
  \end{definition}
  \begin{definition}\label{Def. general Dirac structure}
  Let $\mathcal{X}$ be a manifold and $\mathcal{V}$ be a vector bundle over $\mathcal{X}$ with fibers $\mathcal{V}_x\ (x\in \mathcal{X})$. A Dirac structure on $\mathcal{V}$ is a vector sub-bundle $\mathcal{D}\subseteq \mathcal{V}\oplus \mathcal{V}^{\ast}$ such that
  $$\mathcal{D}_x\subseteq \mathcal{V}_x\oplus \mathcal{V}_x^{\ast}$$
  is a linear Dirac structure for every $x\in \mathcal{X}$. Here $\mathcal{V}^*$ is the dual bundle of $\mathcal{V}$ and $\oplus$ represents Whitney sum.
\end{definition}
Now let $\mathcal{X}=\m R^m$ with $x(t)\in \m R^m$ and 
\[
  \mathcal{V}:=MT\mathcal{X}\oplus \ep^m\oplus \ep^{2m}
\]
where $T\mathcal{X}$ is the tangle bundle and $\ep^m=\mathcal{X}\times \m R^m$ the trivial bundle. Also, for all $p\in \mathcal{X}=\m R^m$, define
\[
    \mathcal{D}_{p}=\left\{(v_f,v_e)\in \mathcal{V}_{p}\oplus \mathcal{V}_{p}^{\ast}\ \Bigg|\ v_f+\begin{pmatrix}
       A & I_{2m}  \\
       -I_{2m} & 0 
    \end{pmatrix}v_e=0\right\},  
\]
then the sub-bundle $\mathcal{D}$ with fiber $\mathcal{D}_p$ is a Dirac structure on $\mathcal V$. Easy to check that if $x:\m R\to \m R^m$ solves \eqref{eq:sysPH} then 
\[
  (v_f(t),v_e(t))\in \mathcal{D}_{x(t)},\quad \forall t\in \m R 
\]
where 
\begin{equation}\label{eq:contife}
    v_f(t)=\begin{pmatrix}
      -M\dot x(t)\\
      y(x(t))\\
      z(x(t))\\
      u(x(t))
    \end{pmatrix},\quad 
    v_e(t)=\begin{pmatrix}
      z(x(t))\\
      u(x(t))\\
      -B\binom{z(x(t))}{u(x(t))}
    \end{pmatrix}.
\end{equation}
This shows that \eqref{eq:sysPH} has the Dirac structure $\mathcal D$.

Numerically we apply an $s$-stage Runge-Kutta method 
\begin{align}\label{eq:RK}
\begin{split}
Mk_i&=(P-Q)z\left(x_0+h\sum\limits_{j=1}^s a_{ij}k_j\right)+(F-V)u\left(x_0+h\sum\limits_{j=1}^s a_{ij}k_j\right), \\
x_f&=x_0+h\sum\limits_{j=1}^s b_j k_j.
\end{split}
\end{align}
Consequently, there exists a discrete Dirac structure $\left\{\mathcal{D}_{x_i}\ |\ i=1,\cdots,s\right\}$ defined by 
\begin{align}\label{eq:disDirac}
\mathcal{D}_{x_i}=\left\{(v_{f,i},v_{e,i})\in \mathcal{V}_{x_i}\oplus \mathcal{V}_{x_i}^{\ast}\ \Bigg|\ v_{f,i}+\begin{pmatrix}
   A & I_{2m}  \\
   -I_{2m} & 0 
\end{pmatrix}v_{e,i}=0\right\}    
\end{align}
at all points $x_i:=x_0+h\sum_{j=1}^s a_{ij}k_j$. Similarly as the continuous case \eqref{eq:contife} for each $i$ define 
\[
  v_{f,i}=\begin{pmatrix}
    -M k_i\\
    y(x_i)\\
    z(x_i)\\
    u(x_i)
  \end{pmatrix},\quad 
  v_{e,i}=\begin{pmatrix}
    z(x_i)\\
    u(x_i)\\
    -B\binom{z(x_i)}{u(x_i)}
  \end{pmatrix},
\]
then the first equation of \eqref{eq:RK} is equivalent to $(v_{f,i},v_{e,i})\in \mathcal{D}_{x_i}$ for all $i=1,\dots,s$. This shows that the method \eqref{eq:RK} obeys the discrete Dirac structure \eqref{eq:disDirac}.

We list two Runge-Kutta methods 
\begin{table}[ht]
	\caption{Two Runge-Kutta Methods}
	\renewcommand\arraystretch{1.5}
	\centering
	\subtable{
		\begin{tabular}{c|c}
			$c$ & $A$ \\
			\hline
			&    $b^\top$
		\end{tabular}\quad =
	}
	\subtable[]{\label{rk:impeuler}
		\centering
		\begin{tabular}{c|c}
			$1$ & $1$ \\
			\hline
			&    1
		\end{tabular}\quad or
	}
	\subtable[]{\label{rk:impmid}
		\centering
		\begin{tabular}{c|c}
			$\frac{1}{2}$ & $\frac{1}{2}$ \\
			\hline
			&    1
		\end{tabular}
	}
\end{table}
with coefficient matrices\footnote{The vector $c=(c_i)$ of a Runge-Kutta method is not used for autonomous differential equations.} $A=(a_{ij}),b=(b_j)$. Here table \ref{rk:impeuler} represents the implicit Euler method of order one and \ref{rk:impmid} for the implicit midpoint method of order two.

\subsection{Switching between two Stages}
Suppose that the system operates on stage I in time $[0,t_1]$ and at time $t_1$ node $2$ is suddenly short ground so that stage I transforms to stage II. Also suppose that at $t_1$, the system has state quantity $\bm{\Psi}^{\mathrm I} (t_1),\; \dot{\bm\Psi}^{\mathrm I}(t_1),\; \bm{\theta}^{\mathrm I}(t_1)$ and $\dot{\bm{\theta}}^{\mathrm I}(t_1)$, we have to transform it to the initial condition of stage II, i.e. to decide $\bm{\Psi}^{\mathrm{II}} (t_1),\; \dot{\bm\Psi}^{\mathrm{II}}(t_1),\; \bm{\theta}^{\mathrm{II}}(t_1)$ and $\dot{\bm{\theta}}^{\mathrm{II}}(t_1)$.

By Newton Law II and the fact that electromagnetic part changes much faster than mechanical part, it is reasonable to assume that mechanical part keeps unchanged in a short time, i.e. 
\begin{equation}\label{eq:thetala}
	\bm{\theta}^{\mathrm{II}}(t_1)=\bm{\theta}^{\mathrm I}(t_1),\quad \dot{\bm{\theta}}^{\mathrm{II}}(t_1)=\dot{\bm{\theta}}^{\mathrm I}(t_1).
\end{equation}
As for flux linkages and voltages, we define for stage II
\begin{gather*}
    \Lambda_0:=\{2i-1,2i: r_i=0\},\\
	\Lambda_1:=\{2i-1,2i: r_i=+\infty\},\; \Lambda_2:=\{2i-1,2i: 0<r_i<+\infty\},
\end{gather*}
i.e. $\Lambda_1$ represents those nodes totally disconnected to ground in stage II, $\Lambda_2$ those nodes connected to ground by some resistance $0<r_i<+\infty$ and $\Lambda_0$ those nodes short ground. Then by the assumption that flux linkages and voltages of nodes short ground should be zero (see beginning of section \ref{sec:specialcase}), we see
\begin{equation}\label{eq:psila3}
	\Psi_k^{\mathrm{II}}(t_1)=0,\quad \forall k\in \Lambda_0.
\end{equation}
For $k\in \Lambda_2$, since $0<r_k<+\infty$ and $r_k \dot{\Psi}_k +N_k(\theta)\bm\Psi=f_k(t)$, here $N_k,\; f_k$ are the $k$-th row of $N,\; f$ respectively, we know $|\dot{\Psi}_k(t_1)|<+\infty$ and hence $\Psi_k$ keeps unvaried in a short time:
\begin{equation}\label{eq:psila2}
	\Psi_k^{\mathrm{II}}(t_1)=\Psi_k^{\mathrm{I}}(t_1),\quad \forall k\in \Lambda_2.
\end{equation}
Finally for $k\in \Lambda_1$, by theorem \ref{thm:oldnew} and letting $\bm\Psi_{\Lambda_j}:=(\Psi_k: k\in \Lambda_j)$, 
\begin{equation}\label{eq:psila1}
	\bm\Psi_{\Lambda_1}^{\mathrm{II}}=A_0 (\theta(t_1))\cdot \bm\Psi_{\Lambda_2}^{\mathrm{II}} (t_1),\quad A_0=-N_{\Lambda_1,\Lambda_1}^{-1}\cdot N_{\Lambda_1,\Lambda_2}.
\end{equation}

Now for voltages $\dot{\bm{\Psi}}^{\mathrm{II}}(t_1)$. If $k\in \Lambda_0$, then beginning of section \ref{sec:specialcase} shows 
\begin{equation}\label{eq:dotpsila3}
  \dot{\Psi}_k^{\mathrm{II}}(t_1)=0,\quad \forall k\in \Lambda_0.
\end{equation}
If $k\in \Lambda_2$, then $r_k \dot{\Psi}_k +N_k(\theta)\bm\Psi=f_k(t)$ gives
\begin{equation}\label{eq:dotpsila2}
  \dot{\Psi}_k^{\mathrm{II}}(t_1)=r_k^{-1}\Big(f_k(t_1)-N_k (\theta(t_1))\cdot\bm{\Psi}^{\mathrm{II}}(t_1)\Big).
\end{equation}
If $k\in \Lambda_1$, then \eqref{eq:psila1} implies 
\begin{equation}\label{eq:dotpsila1}
  \begin{aligned}
    \dot{\bm\Psi}_{\Lambda_1}(t_1)=
    &\frac{d}{dt}\Big(A_0 (\theta(t_1))\cdot \bm\Psi_{\Lambda_2}^{\mathrm{II}} (t_1)\Big)\\
    =&\Big(\frac{d}{dt}A_0 (\theta(t_1))\Big)\cdot \bm\Psi_{\Lambda_2}^{\mathrm{II}} (t_1)\\
    &+A_0 (\theta(t_1))\cdot \dot{\bm\Psi}_{\Lambda_2}^{\mathrm{II}} (t_1).
  \end{aligned}
\end{equation}

In summery, this structure-preserving method obeys the following processes.
\begin{itemize}
  \item[\textbf{S1}] To solve the equilibrium point of \eqref{eq:xy} and transform it to the initial condition of stage I by \eqref{eq:trans}, this is called the stable point of stage I.
  \item[\textbf{S2}] To reduce stage I to system \eqref{eq:newsys} and apply method \eqref{eq:RK} with coefficient table \ref{rk:impeuler} or \ref{rk:impmid} to \eqref{eq:sysPH}, which is equivalent to \eqref{eq:newsys}.
  \item[\textbf{S3}] To determine the initial condition of stage II by \eqref{eq:thetala} to \eqref{eq:dotpsila1}.
  \item[\textbf{S4}] To repeat \textbf{S2} and \textbf{S3} for stage II and stage III.
  \item[\textbf{S5}] To compare the end period value (generated by \textbf{S2} on stage III) and the stable point (generated by \textbf{S1} on stage III) of stage III to decide whether the system recover its stability and how this numerical method performs.
  \captionof{process}{Steps of the Structure-Preserving Method.}
  \label{pro:SPmethod}
\end{itemize}
\section{Numerical Simulations\label{sec:simulation}}
By solving the equilibrium point of \eqref{eq:xy} for stage I and inserting it into \eqref{eq:trans} at time $t=0$, we get the initial condition:
\begin{align*}
\dot{\bm{\Psi}}_0=\left(\begin{array}{c}
  26015.4363\\
  -6.3200\\
  26491.9549\\
  1157.5512\\
  26968.4734\\
  2321.4224\\
  0\\
  0\\
  0\\
  0
\end{array}\right), \
\bm{\Psi}_0=\left(\begin{array}{c}
  -0.0168\\
-69.0081\\
3.0705\\
-70.2721\\
6.1578\\
-71.5361\\
492.6430\\
448.9184\\
-297.6778\\
-297.6778
\end{array}\right),\ \dot{\bm{\theta}}_0=\begin{pmatrix}
  120\pi \\ 120\pi \\ 120\pi \\ 120\pi \\ 120\pi \\ 120\pi
\end{pmatrix},\ \bm{\theta}_0=\left(\begin{array}{c}
  -0.7429\\
-0.7569\\
-0.7713\\
-0.7848\\
-0.7975\\
-0.7975
\end{array}\right),   
\end{align*}
here all data (except $0,120\pi$) are account to four decimal places. 

The P-C method with $\beta=1$ and the structure preserving method with coefficient matrices in table \ref{rk:impeuler} are both of order one. Similarly the P-C method with $\beta=0.5$ and the structure preserving method with coefficient matrices in table \ref{rk:impmid} are both of order two. We apply all these four methods to the case \ref{sec:specialcase} with break time $t_b=0.5\ \mathrm{s}$, respectively, and the results are shown in figure \ref{fig:tbreak5}. It is shown that for both order one and order two numerical methods, our structure preserving methods behave better than P-C methods. The $5$-th angular velocity $\omega_5$ tends to $120\pi\ \mathrm{rad}/\mathrm{s}$ quicker by our structure preserving methods.
\begin{figure}[ht]
 \centering
 \subfigure[1st order methods in $0\sim 11\ \mathrm{s}.$]{
 	\includegraphics[scale=0.2]{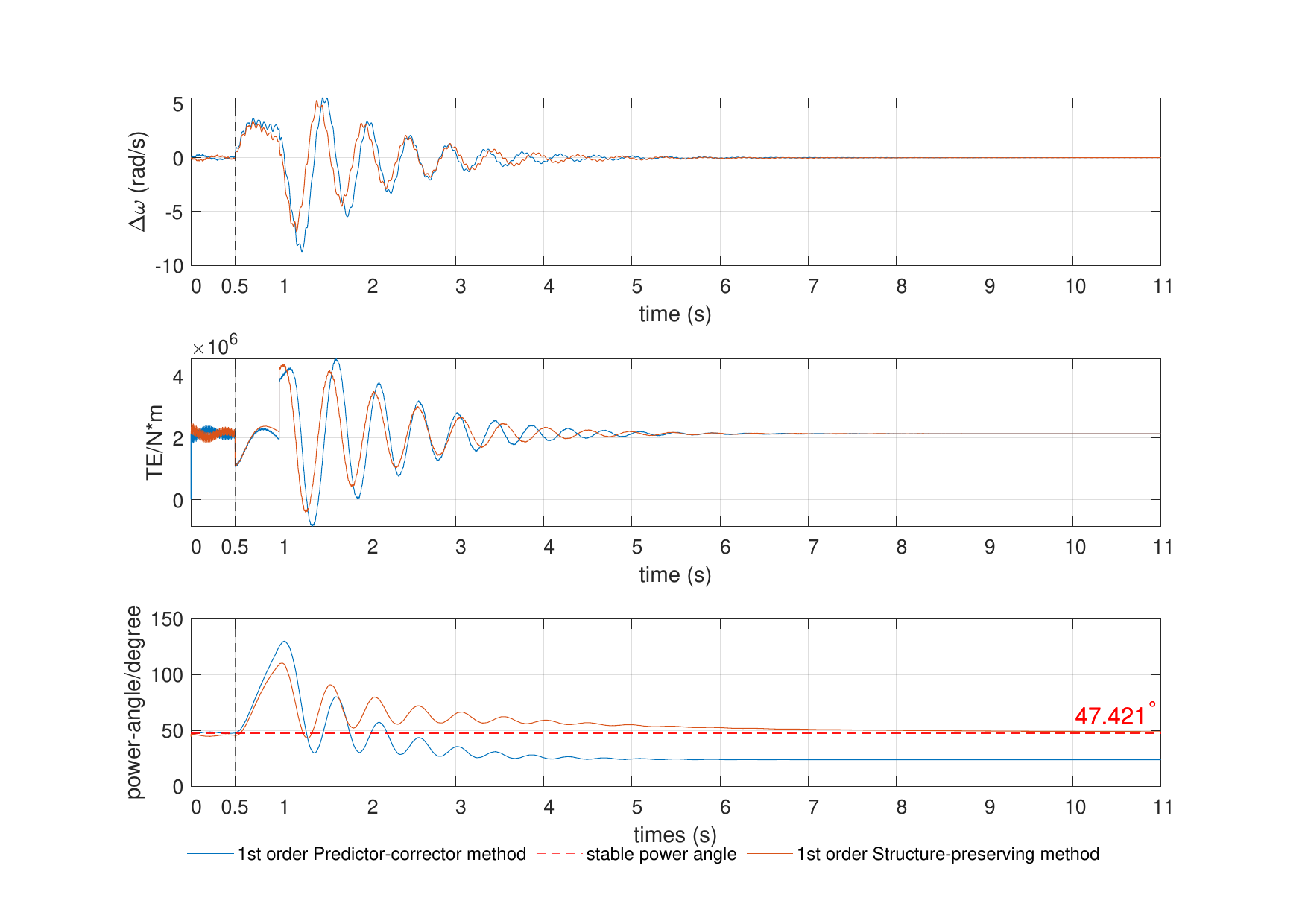}
 	\label{fig:Gauss0pt5}
 }
 \subfigure[1st order methods in $50\sim 100\ \mathrm{s}.$]{
 	\includegraphics[scale=0.2]{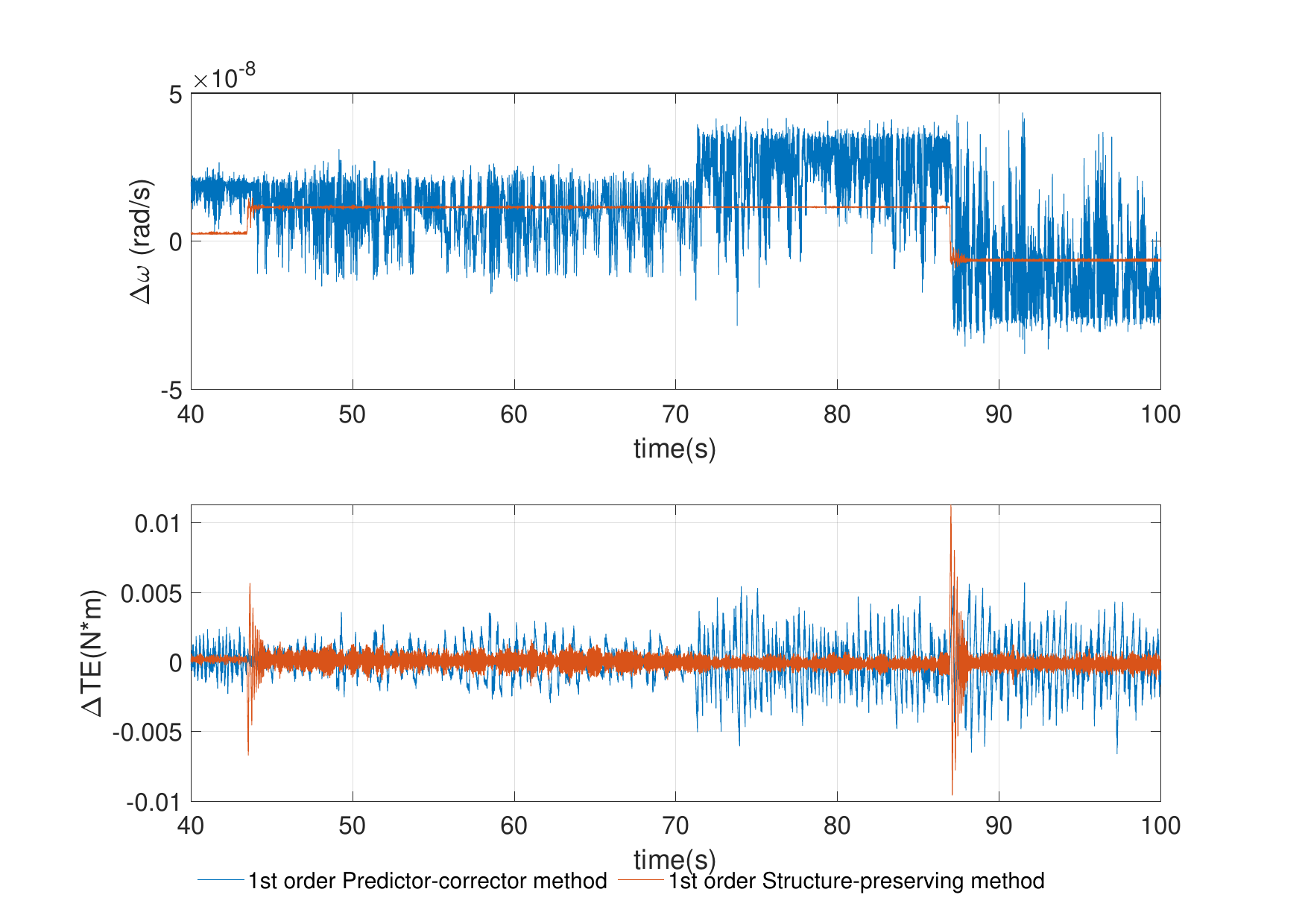}
 	\label{fig:PC0pt5}
 }

 \subfigure[2nd order methods in $0\sim 11\ \mathrm{s}.$]{
 	\includegraphics[scale=0.2]{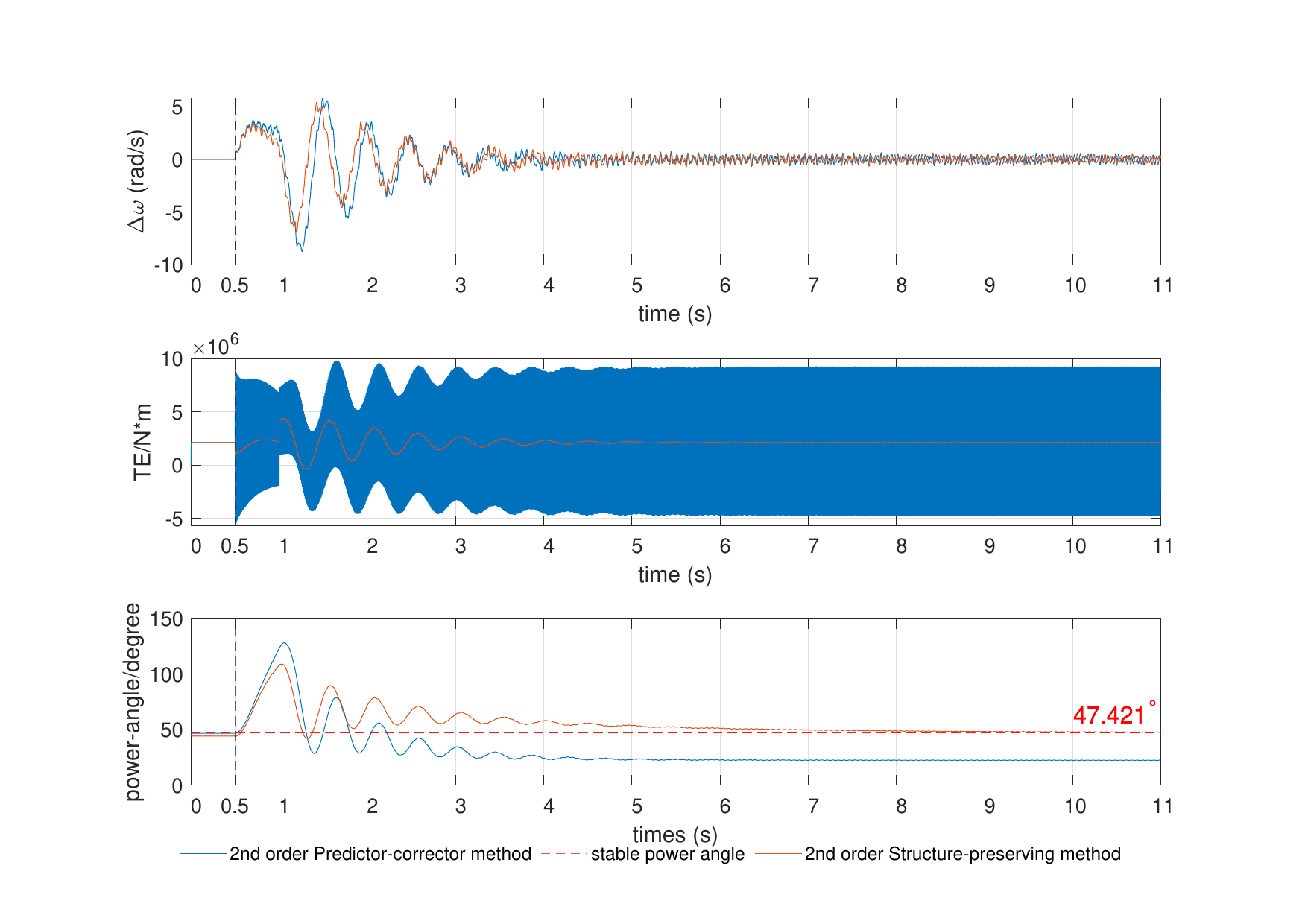}
 }
 \subfigure[2nd order methods in $500\sim 1000\ \mathrm{s}.$]{
 	\includegraphics[scale=0.2]{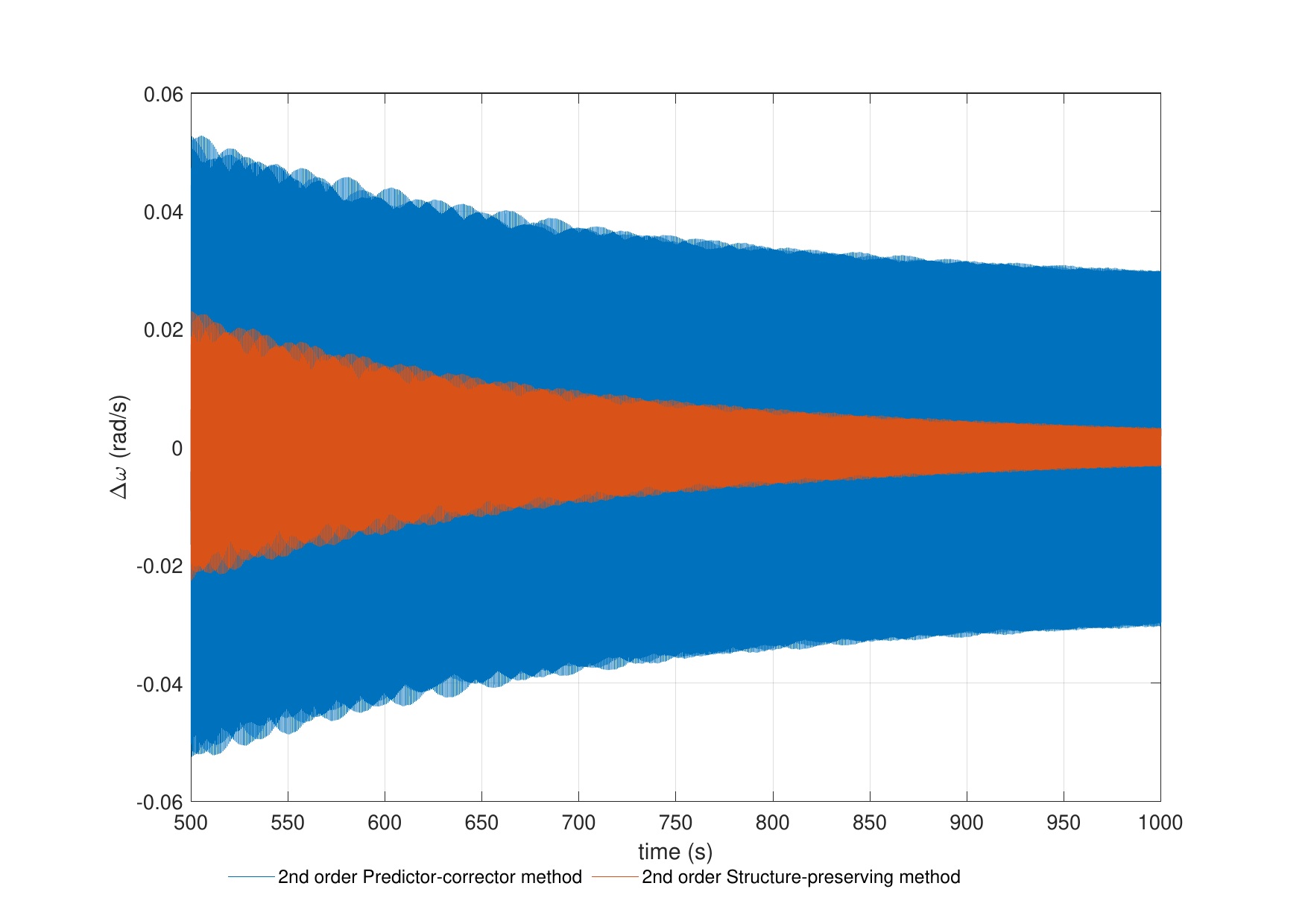}
 }
 \caption{Simulation of error of $5$-th angular velocity $\Delta\omega$, electromagnetic torque TE and power angle by two methods with break time $t_b=0.5$ seconds.}
 \label{fig:tbreak5}
\end{figure}

Here $\Delta\om:=\dot\theta_5-120\pi$, $T_{\mathrm{E}}:=\frac{1}{2}\bm{\Psi}^\top \frac{dN}{d\theta}\bm{\Psi}=\frac{1}{2}\widetilde{\bm{\Psi}}^\top \frac{dN}{d\theta}\widetilde{\bm{\Psi}}$ is called the electromagnetic torque and the power angle is defined as $\theta_5-\angle(\dot\Psi_{1\alpha}+\mathrm{i}\cdot \dot\Psi_{1\beta})$, i.e. the difference between the angle $\theta_5$ of the fifth mass block in generator and the argument of the voltage $\dot\Psi_{1\alpha}+\mathrm{i}\cdot \dot\Psi_{1\beta}\in \mathbb{C}$ at node $1$. 

Recall that the angle degree $47.421^\circ$ represents the equilibrium point of stage III, i.e. the value to which the power angle should converge if the circuit system finally tends to stability. This indicates that the structure preserving method performs better in long time compared to the P-C method, since the power angle tends to $47.421^\circ$ in figure \ref{fig:Gauss0pt5} but not in figure \ref{fig:PC0pt5}. 

Also, a comparison of these two methods for power-angle in $1500$ seconds is shown in figure \ref{fig:longang}, which shows that our structure preserving method behaves more stable long timely.

\begin{figure}[ht]
	\centering
	\subfigure[2nd order Structure Preserving method for power angle in $1500\ \mathrm{s}$.]{
		\includegraphics[scale=0.45]{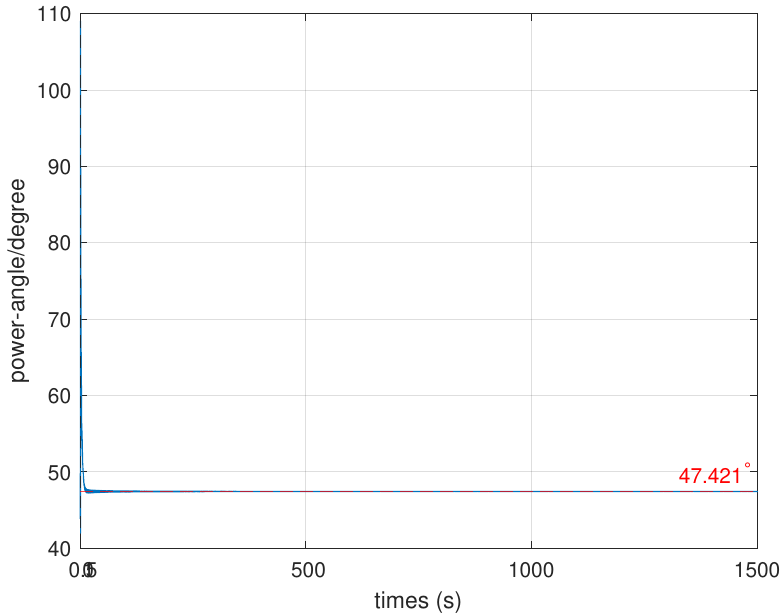}
	}
	\subfigure[2nd order P-C method for power angle in $1500\ \mathrm{s}$.]{
		\includegraphics[scale=0.45]{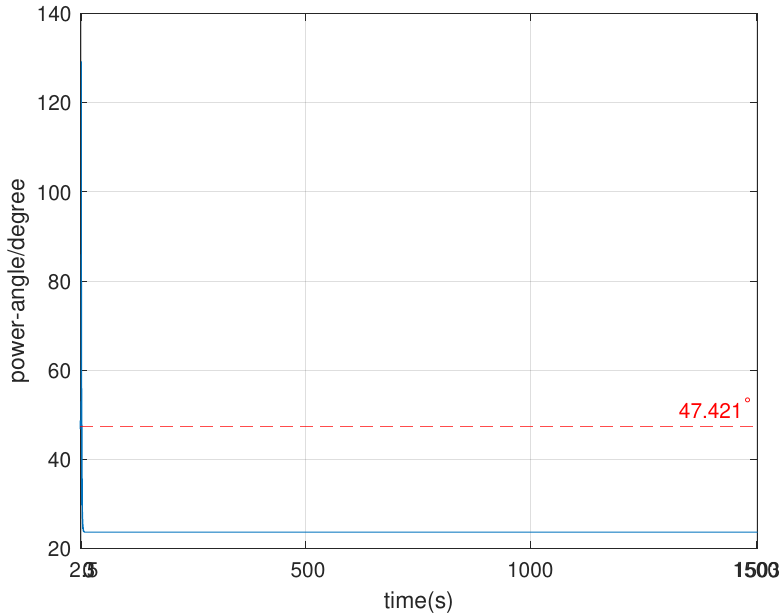}
	}
	\caption{Simulation of power angle in $1500$ seconds.}
	\label{fig:longang}
\end{figure}

Thus, to simulate the critical clearing time (CCT), we apply the structure preserving method only. 
\begin{figure}[ht]
	\centering
	\subfigure[2nd order Structure Preserving method for $t_b=0.77\ \mathrm{s}$.]{
		\includegraphics[scale=0.45]{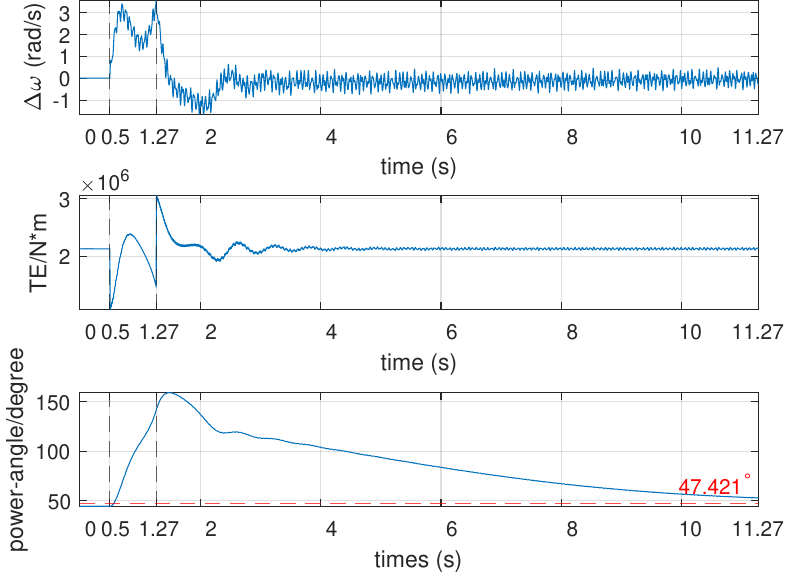}
	}
	\subfigure[2nd order Structure Preserving method for $t_b=0.78\ \mathrm{s}$.]{
		\includegraphics[scale=0.45]{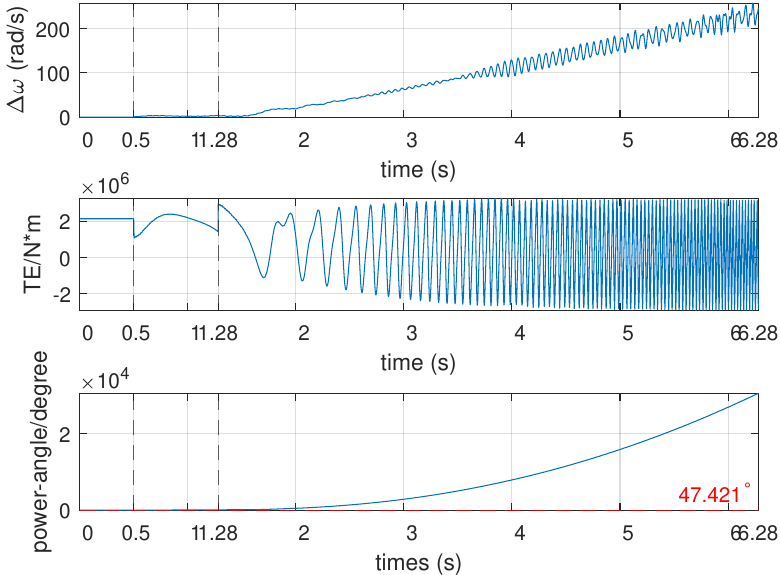}
	}
	\caption{Simulation for CCT by Structure Preserving method.}
	\label{fig:CCT}
\end{figure}
In this way, we can conclude from figure \ref{fig:CCT} that the critical clearing time for the circuit system \label{specialcase} is about $0.77$ seconds.
\section{Conclusions}\label{sec:conclusion}
In this article we newly establish a reductive form of a fault transient circuit system, which contains three stages and two switching processes, and apply a structure-preserving method on it. We accelerate the reduction process of this structure-preserving method by showing that a key matrix $\wt{N}(\theta)$ is a linear combination of $\sin\theta,\dots,\cos2\theta$. Also we rigorously derive a switching method to decide the initial condition of a stage from the final condition of the former stage, so that the circuit values maintain stable. A simplified circuit system is applied with our structure-preserving method and a predictor-corrector method simultaneously and the numerical results show that the corresponding physical quantities including angular velocities, electromagnetic torque and power angle are simulated more precisely by our method. In this sense, by using the structure-preserving method, we conclude that the critical clearing time for the system is about $0.77$ seconds. In addition, the system \eqref{eq:oldsys} possesses a range of geometric structures, including the Birkhoffian structure, which is employed to characterize dissipative systems. The development of Birkhoffian numerical methods will be addressed in future research.
\section*{Acknowledgments}
This study is supported by State Key Laboratory of Advanced Power Transmission Technology (Grant No. GEIRI-SKL-2023-006), State Key Laboratory of Alternate Electrical Power System with Renewable Energy Sources (Grant No. LAPS23003) and National Natural Science Foundation of China (Grant No. 12171466).

\bibliographystyle{plain}
\bibliography{ref}

\begin{thebibliography}{10}

\bibitem{EPPC}
E~Celledoni and E~H Høiseth.
\newblock Energy-preserving and passivity-consistent numerical discretization
  of port-{H}amiltonian systems.
\newblock {\em arXiv preprint}, arXiv:1706.08621, 2017.

\bibitem{TMS}
S~Cheng, Y~Cao, and Q~Jiang.
\newblock {\em Theory and Method of Subsynchronous Oscillation in Power
  System}.
\newblock Science Press, Beijing, 2009.

\bibitem{DCS}
H~W Dommel.
\newblock Digital computer solution of electromagnetic transients in single and
  multiphase networks.
\newblock {\em IEEE Trans Power Appar Syst}, 88:388--399, 1969.

\bibitem{EMTP}
H~W Dommel.
\newblock {\em EMTP theory book}.
\newblock Microtran Power System Analysis Corporation, Vancouver, British
  Columbia, 2nd edition, 1992.

\bibitem{RHED}
Y~Dong, Y~Wang, J~Han, Y~Li, S~Miao, and J~Hou.
\newblock Review of high efficiency digital electromagnetic transient
  simulation technology in power system.
\newblock {\em Proc CSEE}, 38:2213--2231, 2018.

\bibitem{HASM}
B~L Ehle.
\newblock High order {A}-stable methods for the numerical solution of systems
  of {DE}s.
\newblock {\em Bit Numer Math}, 8:276--278, 1968.

\bibitem{S1}
K.~Feng.
\newblock On difference schemes and symplectic geometry.
\newblock In K.~Feng, editor, {\em Proceedings of 1984 Beijing Symposium on
  Differential Geometry and Differential Equations}, pages 42--58, Beijing,
  1985. Science Press.

\bibitem{NSDAS}
E~Hairer, C~Lubich, and M~Roche.
\newblock {\em The Numerical Solution of Differential-Algebraic Systems by
  Runge-Kutta Methods}.
\newblock Springer-Verlag, Berlin, 1989.

\bibitem{GNI}
E~Hairer, C~Lubich, and G~Wanner.
\newblock {\em Geometric Numerical Integration: Structure Preserving Algorithms
  for Ordinary Differential Equations}, pages 179--195.
\newblock Springer-Verlag, Berlin, 2nd edition, 2006.

\bibitem{SODE1}
E~Hairer, S~P Nørsett, and G~Wanner.
\newblock {\em Solving Ordinary Differential Equations I: Nonstiff Problems},
  pages 356--360.
\newblock Springer-Verlag, Berlin, 2nd edition, 1993.

\bibitem{SODE2}
E~Hairer and G~Wanner.
\newblock {\em Solving Ordinary Differential Equations II: Stiff and
  Differential-Algebraic Problems}.
\newblock Springer-Verlag, Berlin, 2nd edition, 1996.

\bibitem{CPD1}
Y~He, Z~Zhou, Y~Sun, J~Liu, and H~Qin.
\newblock Explicit {K}-symplectic algorithms for charged particle dynamics.
\newblock {\em Phys Lett A}, 381:568--573, 2017.

\bibitem{MA}
Roger~A. Horn and Charles~R. Johnson.
\newblock {\em Matrix Analysis}.
\newblock Cambridge, 2nd edition, 2013.

\bibitem{FBMSR}
{IEEE Committee}.
\newblock First benchmark model for computer simulation of subsynchronous
  resonance.
\newblock {\em IEEE Trans Power Appar Syst}, 96:1565--1572, 1977.

\bibitem{DTP}
F~Ji, L~Gao, and C~Lin.
\newblock Dynamics of three phase {AC} system and {VSC} access problem
  research.
\newblock {\em Proc CSEE}, 42:2286--2298, 2022.

\bibitem{LMMS}
F~Ji, L~Gao, C~Lin, and Y~Liu.
\newblock Lagrangian modelling and motion stability of synchronous generator
  power systems.
\newblock {\em arXiv preprint}, arXiv:2311.03737, 2023.

\bibitem{NDE}
F~Ji, Y~Qiu, X~Wei, X~Wu, and Z~He.
\newblock Nodal dynamic equation used for electromagnetic transient simulation
  of linear switching circuit.
\newblock {\em IET Sci, Meas Technol}, 12:626--633, 2018.

\bibitem{PSSC}
P~Kundur.
\newblock {\em Power System Stability and Control}.
\newblock McGraw-hill, New York, 1994.

\bibitem{DAE}
P~Kunkel and V~Mehrmann.
\newblock {\em Differential-Algebraic Equations. Analysis and Numerical
  Solution}.
\newblock European Mathematical Society Publishing House, Z\"urich, 2006.

\bibitem{pHDAE}
V~Mehrmann and R~Morandin.
\newblock Structure-preserving discretization for port-{H}amiltonian descriptor
  systems.
\newblock In {\em 2019 IEEE 58th Conference on Decision and Control (CDC)},
  pages 6863--6868, Nice, France, 2019.

\bibitem{S2}
J~M Sanz-Serna.
\newblock Symplectic integrators for {H}amiltonian problems: an overview.
\newblock {\em Acta Numer}, 1:243--286, 1992.

\bibitem{S3}
Y~Tang, V~M Pérez-García, and L~Vázquez.
\newblock Symplectic methods for the {A}blowitz-{L}adik model.
\newblock {\em Appl Math Comput}, 2:17--38, 1997.

\bibitem{CPD2}
M~Tao.
\newblock Explicit high-order symplectic integrators for charged particles in
  general electromagnetic fields.
\newblock {\em J Comput Phys}, 327:245--251, 2016.

\bibitem{PHST}
A~J {Van Der Schaft} and D~Jeltsema.
\newblock Port-{H}amiltonian systems theory: An introductory overview.
\newblock {\em Found Trends Syst Control}, 1:173--378, 2014.

\bibitem{PSETS}
N~Watson and J~Arrillaga.
\newblock {\em Power systems electromagnetic transients simulation}.
\newblock The Institution of Engineering and Technology, London, 2nd edition,
  2019.

\bibitem{PMRA}
Z~Xu.
\newblock Physical mechanism and research approach of generalized synchronous
  stability for power systems.
\newblock {\em Electr Power Autom Equip}, 40:3--9, 2020.

\bibitem{ENSS}
J~Zhang, A~Zhu, J~Feng, L~Chang, and Y~Tang.
\newblock Effective numerical simulations of synchronous generator system.
\newblock {\em Simulation: Transactions of the Society for Modeling and
  Simulation International}, 100:595--611, 2023.

\bibitem{NLS2}
R~Zhang, J~Huang, Y~Tang, and L~Vázquez.
\newblock Revertible and symplectic methods for the {A}blowitz-{L}adik discrete
  nonlinear {S}chrodinger equation.
\newblock In R~Crosbie, H~Vakilzadian, T~Ericsen, and Others, editors, {\em
  Proceedings of the 2011 Grand Challenges on Modeling and Simulation
  Conference}, pages 297--306, Hague, Netherlands, 2011.

\bibitem{GCD2}
R~Zhang, J~Liu, Y~Tang, H~Qin, and B~Zhu.
\newblock Canonicalization and symplectic simulation of the gyrocenter dynamics
  in time-independent magnetic fields.
\newblock {\em Phys Plasmas}, 21:032504, 2014.

\bibitem{CPD3}
R~Zhang, Y~Wang, Y~He, J~Xiao, J~Liu, H~Qin, and Y~Tang.
\newblock Explicit symplectic algorithms based on generating function for
  relativistic charged particle dynamics in time-dependent electromagnetic
  field.
\newblock {\em Phys Plasmas}, 25:022117, 2018.

\bibitem{DAA}
Y~Zhang, A~M Gole, W~Wu, B~Zhang, and H~Sun.
\newblock Development and analysis of applicability of a hybrid transient
  simulation platform combining {TSA} and {EMT} elements.
\newblock {\em IEEE Trans Power Syst}, 28:357--366, 2013.

\bibitem{CPD4}
Z~Zhou, Y~He, Y~Sun, J~Liu, and H~Qin.
\newblock Explicit symplectic methods for solving charged particle
  trajectories.
\newblock {\em Phys Plasmas}, 24:052507, 2017.

\bibitem{GCD1}
B~Zhu, Z~Hu, Y~Tang, and R~Zhang.
\newblock Symmetric and symplectic methods for gyrocenter dynamics in
  time-independent magnetic fields.
\newblock {\em Int J Model Simul Sci Comput}, 7:1650008, 2016.

\bibitem{EKS}
B~Zhu, L~Ji, A~Zhu, and Y~Tang.
\newblock Explicit {K}-symplectic methods for nonseparable non-canonical
  {H}amiltonian systems.
\newblock {\em Chin Phys B}, 32:020204, 2023.

\bibitem{NLS1}
B~Zhu, Y~Tang, R~Zhang, and Y~Zhang.
\newblock Symplectic simulation of dark solitons motion for nonlinear
  {S}chrodinger equation.
\newblock {\em Numer Algorithms}, 81:1485--1503, 2019.

\end{thebibliography}
\nocite{*}

\end{document}